\newtheorem{theorem}{Theorem}[section]
\newtheorem{proposition}[theorem]{Proposition}
\newtheorem{lemma}[theorem]{Lemma}
\theoremstyle{definition}
\newtheorem{definition}[theorem]{Definition}
\newtheorem{abctheorem}{Theorem}
\newtheorem{abcproblem}[abctheorem]{Problem}
\DeclareMathOperator{\E}{E}
\newcommand{\Q}{\mathbb{Q}}
\newcommand{\Z}{\mathbb{Z}}
\newcommand{\C}{\mathbb{C}}
\newcommand{\e}{\epsilon}
\newcommand{\abs}[1]{\lvert#1\rvert}
\newcommand{\bigabs}[1]{\big\lvert#1\big\rvert}
\newcommand{\biggabs}[1]{\bigg\lvert#1\bigg\rvert}
\newcommand{\Biggabs}[1]{\Bigg\lvert#1\Bigg\rvert}
\newcommand{\sums}[1]{\sum_{\substack{#1}}}
\begin{document}

\title[The correlation measures of finite sequences]{$\!$The correlation measures of finite sequences:$\!$ limiting distributions and minimum values}

\author{Kai-Uwe Schmidt}

\date{10 January 2014 (revised 06 January 2015)}

\subjclass[2010]{Primary: 11K45; Secondary 60C05, 68R15}

\address{Faculty of Mathematics, Otto-von-Guericke University, Universit\"atsplatz~2, 39106 Magdeburg, Germany.}

\email{kaiuwe.schmidt@ovgu.de}

\begin{abstract}
Three measures of pseudorandomness of finite binary sequences were introduced by Mauduit and S\'ark\"ozy in 1997 and have~been studied extensively since then: the normality measure, the well-distri\-bution measure, and the correlation measure of order $r$. Our main result is that the correlation measure of order $r$ for random binary sequences converges strongly, and so has a limiting distribution. This solves a problem due to Alon, Kohayakawa, Mauduit, Moreira, and R\"odl. We also show that the best known lower bounds for the minimum values of the correlation measures are simple consequences of a celebrated result due to Welch, concerning the maximum nontrivial scalar products over a set of vectors.
\end{abstract}

\maketitle


\section{Introduction and main results}

We consider finite binary sequences, namely elements $A_n$ of $\{-1,1\}^n$. Mauduit and S\'ark\"ozy~\cite{Mauduit1997} introduced three measures of pseudorandomness for finite binary sequences: the \emph{well distribution measure} $W(A_n)$, the \emph{normality measure} $\mathcal{N}(A_n)$, and the \emph{$r$-th order correlation measure} $C_r(A_n)$. These measures have been studied extensively (see~\cite{Mauduit1997},~\cite{Cassaigne2002},~\cite{Alon2006},~\cite{Alon2007},~\cite{Ahlswede2008},~\cite{Aistleitner2013},~\cite{Aistleitner2014}, for example). Finite binary sequences for which these measures are small are considered to possess a high `level of randomness'.
\par
In this paper, we are concerned with the correlation measures of finite binary sequences. Let $A_n=(a_1,a_2,\dots,a_n)$ be an element of $\{-1,1\}^n$. For $2\le r \le n$, the \emph{$r$-th order correlation measure} of $A_n$ is defined as
\[
C_r(A_n)=\max_{0\le u_1<u_2<\cdots<u_r<n}\;\max_{1\le m\le n-u_r}\Biggabs{\sum_{j=1}^ma_{j+u_1}a_{j+u_2}\cdots a_{j+u_r}}.
\]
\par
Following earlier work by Cassaigne, Mauduit, and S\'ark\"ozy~\cite{Cassaigne2002}, Alon, Kohayakawa, Mauduit, Moreira, and R\"odl~\cite{Alon2007} studied the behaviour of $W(A_n)$, $\mathcal{N}(A_n)$, and $C_r(A_n)$ when $A_n$ is drawn at random from $\{-1,1\}^n$, equipped with the uniform probability measure. They posed the following problem.
\begin{abcproblem}[{\cite[Problem 33]{Alon2007}}]
\label{probA}
Investigate the existence of the limiting distributions of
\begin{gather}
\left\{\frac{W(A_n)}{\sqrt{n}}\right\}_{n\ge 1}\quad\text{and}\quad
\left\{\frac{\mathcal{N}(A_n)}{\sqrt{n}}\right\}_{n\ge 1}   \nonumber\\
\intertext{and}
\left\{\frac{C_r(A_n)}{\sqrt{n\log \binom{n}{r}}}\right\}_{n\ge r}.\label{eqn:_question_limit_distribution}
\end{gather}
Investigate these distributions.
\end{abcproblem}
\par
The first two instances of Problem~\ref{probA} have been solved recently: Aistleitner~\cite{Aistleitner2013},~\cite{Aistleitner2014} proved that the limiting distributions of $W(A_n)/\sqrt{n}$ and of $\mathcal{N}(A_n)/\sqrt{n}$ exist. Moreover, a tail characterisation of the limiting distribution of $W(A_n)/\sqrt{n}$ is provided in~\cite{Aistleitner2013}. It is known that, if~\eqref{eqn:_question_limit_distribution} has a limiting distribution, then it is a Dirac measure~\cite[Theorem 3]{Alon2007}. We shall resolve the third instance  of Problem~\ref{probA} by proving strong convergence of~\eqref{eqn:_question_limit_distribution}. To do so, we consider the set $\Omega$ of infinite sequences of elements $-1$ or $1$ and endow $\Omega$ in the standard way with the probability measure defined by
\begin{equation}
\Pr\big[(a_1,a_2,\dots)\in\Omega:a_1=c_1,a_2=c_2,\dots,a_n=c_n\big]=2^{-n}   \label{eqn:prob_measure}
\end{equation}
for all $(c_1,c_2,\dots,c_n)\in\{-1,1\}^n$.
\begin{theorem}
\label{thm:conv_almost_surely}
Let $(a_1,a_2,\dots)$ be drawn from $\Omega$, equipped with the probability measure defined by~\eqref{eqn:prob_measure}, and write $A_n=(a_1,a_2,\dots,a_n)$. Let $r\ge 2$ be a fixed integer. Then, as $n\to\infty$,
\[
\frac{C_r(A_n)}{\sqrt{2n\log \binom{n}{r-1}}}\to 1\quad\text{almost surely}.
\]
\end{theorem}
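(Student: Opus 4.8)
The plan is to begin with a deterministic reformulation of $C_r(A_n)$ that explains the appearance of $\binom{n}{r-1}$. Every correlation sum depends only on the \emph{shape} $d=(d_2,\dots,d_r)$ with $0<d_2<\cdots<d_r<n$ obtained by setting $d_i=u_i-u_1$. For a fixed shape put $c_k=a_ka_{k+d_2}\cdots a_{k+d_r}$ and $S^{(d)}_t=\sum_{k=1}^t c_k$ for $0\le t\le N_d$, where $N_d=n-d_r$. Then $\sum_{j=1}^m a_{j+u_1}\cdots a_{j+u_r}=S^{(d)}_{u_1+m}-S^{(d)}_{u_1}$, and letting $u_1,m$ range over all admissible values shows
\[
C_r(A_n)=\max_{d}\Big(\max_{0\le t\le N_d}S^{(d)}_t-\min_{0\le t\le N_d}S^{(d)}_t\Big),
\]
a maximum of the \emph{ranges} of $\binom{n-1}{r-1}\sim\binom{n}{r-1}$ walks. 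The first key observation is that each of these is a genuine simple random walk: the indicator vectors $v_k\in\mathbb{F}_2^n$ of the sets $\{k,k+d_2,\dots,k+d_r\}$ have pairwise distinct minimal element $k$, hence $v_1,\dots,v_{N_d}$ are linearly independent and $c_1,\dots,c_{N_d}$ are independent uniform signs. Thus $C_r(A_n)$ is the maximum over $\sim\binom{n}{r-1}$ simple-random-walk ranges of length $\le n$, and since $\log\binom{n}{r-1}\sim(r-1)\log n$ this already identifies the correct normalisation. I shall also use that $C_r(A_n)$ is nondecreasing in $n$, to pass between a subsequence and the full sequence.

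For the upper bound write $K=\binom{n-1}{r-1}$ and $\lambda=(1+\varepsilon)\sqrt{2n\log K}$. The plan is a union bound over shapes only (not over the pairs $(t,t')$, which would be too wasteful): for a single walk the reflection principle gives $\Pr[\mathrm{range}\ge\lambda]\le C\exp(-\lambda^2/(2N_d))\le C\exp(-\lambda^2/(2n))$ with an absolute constant $C$. Here it is essential to retain the sharp rate $\lambda^2/(2n)$; the crude bound $\mathrm{range}\le 2\max_t|S_t|$ only yields $\exp(-\lambda^2/(8n))$ and loses the constant. Summing over the $K$ shapes gives $\Pr[C_r(A_n)\ge\lambda]\le CK^{\,1-(1+\varepsilon)^2}=CK^{-2\varepsilon-\varepsilon^2}$, which tends to $0$ polynomially in $n$. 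Evaluating this along a geometric subsequence $n_k=\lceil\theta^k\rceil$ makes the probabilities summable, so by Borel--Cantelli and the monotonicity of $C_r$ one obtains $\limsup_n C_r(A_n)/\sqrt{2n\log\binom{n}{r-1}}\le(1+\varepsilon)\sqrt{\theta}$ almost surely; letting $\varepsilon\to0$ and $\theta\to1$ closes the upper bound.

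For the lower bound fix $\varepsilon>0$, set $\lambda=(1-\varepsilon)\sqrt{2n\log K}$, and run a second moment argument on $Y=\#\{d:S^{(d)}_{N_d}\ge\lambda\}$; since the range dominates the endpoint, $C_r(A_n)\ge\lambda$ whenever $Y>0$. Restricting to the $\sim\delta^{r-1}K$ shapes with $d_r\le\delta n$ and using the Gaussian lower tail $\Pr[S^{(d)}_{N_d}\ge\lambda]\ge\exp(-(1+o(1))\lambda^2/(2N_d))$ gives $\mathbb{E}[Y]\ge K^{\,2\varepsilon-\varepsilon^2-o(1)}\to\infty$. The decisive structural input for the variance is that distinct shapes give \emph{uncorrelated} endpoints: the equality $\{k,k+d_2,\dots,k+d_r\}=\{k',k'+d'_2,\dots,k'+d'_r\}$ forces $k=k'$ and $d=d'$, so $\mathbb{E}[c_k^{(d)}c_{k'}^{(d')}]=0$ for $d\neq d'$ and hence $\mathrm{Cov}(S^{(d)}_{N_d},S^{(d')}_{N_{d'}})=0$. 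One then bounds $\mathrm{Var}(Y)=\sum_{d,d'}\mathrm{Cov}(\mathbf 1_{\{S^{(d)}\ge\lambda\}},\mathbf 1_{\{S^{(d')}\ge\lambda\}})$ and shows it is $o(\mathbb{E}[Y]^2)$, after which Chebyshev gives $\Pr[Y=0]\to0$ and, via the same geometric-subsequence and monotonicity device, the almost sure bound $\liminf_n C_r(A_n)/\sqrt{2n\log\binom{n}{r-1}}\ge1-\varepsilon$.

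The main obstacle is precisely this variance estimate. Although the endpoints of different shapes are exactly uncorrelated, the indicator variables need not be independent: mixed $\mathbb{F}_2$-relations such as $v_{k_1}^{(d)}+v_{k_2}^{(d)}=v_{k'_1}^{(d')}+v_{k'_2}^{(d')}$ couple the two walks through higher moments and produce nonzero covariances for the events $\{S^{(d)}_{N_d}\ge\lambda\}$. The heart of the proof is therefore a combinatorial estimate bounding, for each shape, the number of other shapes carrying such low-order relations and the size of the resulting covariances, so that the correlated pairs contribute $o(\mathbb{E}[Y]^2)$ in total. Getting this counting to beat the threshold $(\mathbb{E}[Y])^2\approx K^{2(2\varepsilon-\varepsilon^2)}$ uniformly in the (small) $\varepsilon$, and in particular in the delicate case $r=2$ where $K\sim n$ leaves little room, is where the real work lies.
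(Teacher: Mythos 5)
Your skeleton is sound and in fact mirrors the paper's architecture quite closely: rewrite $C_r(A_n)$ as a maximum over $\binom{n-1}{r-1}$ shapes of ranges of genuine $\pm1$ random walks (the independence claim via distinct minimal elements is exactly the paper's Lemma~\ref{lem:independence}), prove the upper bound by a union bound over shapes with a \emph{sharp-rate} range tail, prove the lower bound by a second-moment/inclusion--exclusion argument over shapes with $d_r$ restricted to a short range, and upgrade to almost sure convergence by Borel--Cantelli along a subsequence plus monotonicity of $C_r$. However, the proposal leaves unproved precisely the two ingredients that carry the technical weight, and the first of these you yourself flag as ``where the real work lies'' without resolving it. A proof plan that defers its identified crux is not a proof.

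Concretely: (1) The pairwise estimate for two distinct shapes. You correctly observe that the endpoints $S^{(d)}_{N_d}$ and $S^{(d')}_{N_{d'}}$ are uncorrelated but that the \emph{events} $\{S^{(d)}_{N_d}\ge\lambda\}$ need not be independent, and that one must bound the covariances coming from mixed $\mathbb{F}_2$-relations among the product variables. This is exactly the paper's Lemma~\ref{lem:Pr_CuCv}, which is proved by computing the mixed $4p$-th moment $\E[(S_{u}S_{v})^{2p}]$ with $p\asymp\log\binom{n}{r-1}$ and counting ``even tuples'' (Lemmas~\ref{lem:even_tuple_single}--\ref{lem:moments}); the decisive combinatorial point is that a consistent pairing containing a cross pair $\{x_i+u_k,\,y_j+v_\ell\}$ forces $x_i$ to determine at least two other free variables, because $(u_2,\dots,u_r)\ne(v_2,\dots,v_r)$ rules out a complete matching of the two blocks. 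Without this (or an equivalent) quantitative bound, your variance estimate --- and hence the entire lower bound --- is missing, not merely unfinished. (2) The range tail. Your union bound needs $\Pr[\mathrm{range}\ge\lambda]\le C\exp(-\lambda^2/(2n))$ at the sharp rate, but the reflection principle gives this only for the one-sided maximum $\max_t S_t$; for the two-sided range the naive split loses a factor $4$ in the exponent, as you note, and the asserted absolute-constant bound is not a one-line consequence of reflection. The paper obtains the needed inequality (with a harmless $(1+\delta)$ slack in $\lambda$ and a $\log n$ prefactor) via Aistleitner's dyadic decomposition, Lemmas~\ref{lem:max_rw_dyadic} and~\ref{lem:large_deviation_max}; some such argument must be supplied. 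A lesser remark: the paper routes the lower bound through $\E[C_r(A_n)]$ (Proposition~\ref{pro:ECr}) combined with the McDiarmid-type concentration inequality of Lemma~\ref{lem:concentration}, which yields summability along the dense subsequence $n_k=\lceil e^{\sqrt k}\rceil$; your direct second-moment-on-$Y$ variant with a geometric subsequence and $\theta\to1$ would also work, but only once gap (1) is filled.
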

\par
Alon, Kohayakawa, Mauduit, Moreira, and R\"odl~\cite{Alon2007} also proved a result on the asymptotic order of $C_r(A_n)$ that holds uniformly for a large range of~$r$.
\begin{abctheorem}[{\cite[Theorem~2]{Alon2007}}]
\label{thma}
Let $A_n$ be drawn uniformly at random from $\{-1,1\}^n$. Then the probability that
\[
\frac{2}{5}\sqrt{n\log\binom{n}{r}}<C_r(A_n)<\sqrt{\bigg(2+\frac{\log\log n}{\log n}\bigg)n\log \bigg(n\binom{n}{r}\bigg)}
\]
holds for all $r$ satisfying $2\le r\le n/4$ tends to $1$ as $n\to\infty$.
\end{abctheorem}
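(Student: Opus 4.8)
The plan is to treat the two bounds separately, with a single distributional identity for one correlation sum serving as the common engine. Fix a tuple $0\le u_1<u_2<\cdots<u_r<n$ and an $m$ with $1\le m\le n-u_r$, write $b_j=a_{j+u_1}a_{j+u_2}\cdots a_{j+u_r}$, and set $S=\sum_{j=1}^m b_j$. I claim $S$ has \emph{exactly} the distribution of a simple $\pm1$ random walk of length $m$, that is, of $\sum_{j=1}^m\varepsilon_j$ with the $\varepsilon_j$ independent and uniform on $\{-1,1\}$. To prove this I would order the products by $j$ and reveal the variables in increasing index order: the highest-index variable in $b_j$ is $a_{j+u_r}$, the indices $j+u_r$ are strictly increasing in $j$, and both $b_1,\dots,b_{j-1}$ and the partial product $\prod_{k<r}a_{j+u_k}$ are determined by the variables of index at most $(j-1)+u_r<j+u_r$. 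Hence, conditionally on $b_1,\dots,b_{j-1}$, the sign $a_{j+u_r}$ is still independent and symmetric, so $b_j$ is a conditionally symmetric $\pm1$ variable; the chain rule then shows $(b_1,\dots,b_m)$ is an i.i.d.\ uniform sign vector. This identity gives sharp two-sided tail estimates for every individual correlation sum, uniformly in the tuple.

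For the upper bound I would combine this with a union bound. There are exactly $\binom{n}{r}$ admissible tuples and fewer than $n$ choices of $m$, so $\Pr[C_r(A_n)\ge t]\le n\binom{n}{r}\max\Pr[\abs{S}\ge t]$, and Hoeffding's inequality for the random walk gives $\Pr[\abs{S}\ge t]\le 2\exp(-t^2/(2m))\le 2\exp(-t^2/(2n))$. Choosing $t=\sqrt{(2+\e_n)\,n\log(n\binom{n}{r})}$ with $\e_n=\log\log n/\log n$ yields $t^2/(2n)=(1+\e_n/2)\log(n\binom{n}{r})$, hence $\Pr[C_r(A_n)\ge t]\le 2(n\binom{n}{r})^{-\e_n/2}$. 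Since $\binom{n}{r}$ is increasing on $2\le r\le n/4$, these terms decrease in $r$; the $r=2$ term has size about $(\log n)^{-3/2}$ and consecutive terms shrink by a factor of about $(\log n)^{-1/2}$, so the sum over $r$ is $(1+o(1))(\log n)^{-1}\to0$. This is exactly the gain that the fudge factor $\e_n$ is engineered to provide, and it shows the upper inequality fails for some $r$ with probability $o(1)$.

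The lower bound is where I expect the real work. Fixing $r$, I would take $m$ to be a constant fraction of $n$ (so that the constraint $u_r\le n-m$ still leaves $\binom{n-m}{r}$ admissible tuples, whose logarithm is $(1-o(1))\log\binom{n}{r}$ for $r=o(n)$ and $\Theta(\log\binom{n}{r})$ for $r$ up to $n/4$), and apply the second moment method to $X=\#\{(u):\abs{S_{(u),m}}\ge t\}$ with $t=\tfrac25\sqrt{n\log\binom{n}{r}}$. By the distributional identity each marginal $\Pr[\abs{S_{(u),m}}\ge t]$ is a random-walk tail, so Cram\'er-type two-sided estimates make $\mathbb{E}[X]$ easily large: the target constant $\tfrac25$ sits comfortably below the typical order of the maximum, leaving substantial slack. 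The obstacle is $\mathbb{E}[X^2]$, i.e.\ the pairwise probabilities $\Pr[\abs{S_{(u)}}\ge t,\ \abs{S_{(u')}}\ge t]$. The $\binom{n}{r}$ sums are strongly dependent because they are built from overlapping monomials in the same variables, and a fully independent subfamily is provably too small to reach the scale $\sqrt{n\log\binom{n}{r}}$, so the dependence cannot be sidestepped.

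To control the covariances I would classify pairs $((u),(u'))$ by the overlap of their index sets $\{j+u_k\}$ and, for each type, decouple the two sums by conditioning on a suitable block of variables so that conditionally they become independent (or nearly independent) random walks, exactly as in the marginal identity: generic pairs then contribute $\Pr[\abs{S_{(u)}}\ge t]\Pr[\abs{S_{(u')}}\ge t]$ up to a factor $1+o(1)$, while the atypical, strongly correlated pairs must be shown rare enough to be negligible. Proving $\sum_{(u)\ne(u')}\Pr[\text{both}]=(1+o(1))\mathbb{E}[X]^2$ uniformly over $2\le r\le n/4$ — and in particular in the large-deviation regime relevant for $r$ near $n/4$, where the cap $\abs{S}\le m\le n$ becomes active and the Gaussian heuristic breaks down — is the technical heart of the argument; Chebyshev's inequality then gives $\Pr[X=0]\to0$ quickly enough to survive the union over $r$.
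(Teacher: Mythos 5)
First, a framing remark: the paper does not prove this statement at all --- Theorem~\ref{thma} is quoted from \cite[Theorem~2]{Alon2007}, and the paper's own contributions are the sharper upper bound (Theorem~\ref{thm:upper_bound}) and the fixed-$r$ lower-bound machinery of Section~\ref{sec:expected_value} --- so the comparison below is with that toolkit. Your distributional identity is exactly Lemma~\ref{lem:independence}, with the same proof idea (reveal $a_{j+u_r}$ last, conditioning on everything of smaller index; the paper cites Mercer for $r=2$), and your upper-bound half is correct: it is the crude form of the Section~\ref{sec:typical_upper_bound} argument, where the paper instead bounds the whole range over $m$ at once (the random-walk range $R_n$, via Lemmas~\ref{lem:max_rw_dyadic} and~\ref{lem:large_deviation_max}) precisely in order to delete the factor $n$ that you keep; that deletion is what buys the stronger Theorem~\ref{thm:upper_bound}, while for Theorem~\ref{thma}'s weaker bound your plain union bound suffices. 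Two small slips in your summation over $r$: the sum is dominated by the $r=2$ term and is $\Theta\big((\log n)^{-3/2}\big)$, not $(1+o(1))(\log n)^{-1}$; and consecutive terms shrink by about $(\log n)^{-1/2}$ only while $r$ is small --- for $r=\Theta(n)$ the ratio $\big(\tfrac{n-r}{r+1}\big)^{-\epsilon_n/2}$ is $1-O(\log\log n/\log n)$ --- so you should split the sum, say at $r=\sqrt{n}$, using $\log\binom{n}{r}\ge r\log 4$ for $r\le n/4$ in the tail. Both slips are easily repaired and the conclusion $o(1)$ stands.

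The genuine gap is the lower bound, where your text is a plan rather than a proof: the step you yourself call ``the technical heart'' --- showing $\sum\Pr[\text{both}]=(1+o(1))\E[X]^2$ uniformly over $2\le r\le n/4$ --- is exactly the hard content of the theorem and is left undone, as is the assertion that Chebyshev's conclusion is strong enough to survive the union over up to $n/4$ values of $r$. Two concrete reasons this cannot be waved through. First, the paper's Lemmas~\ref{lem:Pr_Cu}--\ref{lem:moments} implement precisely your pair-classification/decoupling program (via even-tuple counting), but only for \emph{fixed} $r$, with tuples restricted to $u_r\le n/\log n$ so that all sums have length $n(1-o(1))$, and with constants $5e^{r-2}$, $(4rp)^{2rp}$ and thresholds $n_0(r)$ that blow up when $r$ grows with $n$; nothing in that machinery is uniform up to $r=n/4$, so ``exactly as in the marginal identity'' does not describe an existing argument you can point to. Second, the natural rescue --- converting a weak per-$r$ second-moment bound into a union-robust statement via concentration (Lemma~\ref{lem:concentration}) --- fails for large $r$: with $\theta\asymp\sqrt{n\log\binom{n}{r}}$ the exponent $\theta^2/(2r^2n)\asymp\log\binom{n}{r}/r^2$ is $\Theta(1/n)$ when $r=\Theta(n)$, i.e.\ vacuous. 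Moreover, for $r=\Theta(n)$ the threshold $\tfrac{2}{5}\sqrt{n\log\binom{n}{r}}$ is itself $\Theta(n)$, comparable to the trivial cap $m$, so every marginal and joint tail must be estimated with the exact binomial rate function rather than the Gaussian heuristic, and the count of admissible tuples $\binom{n-m}{r}$ shrinks substantially relative to $\binom{n}{r}$ --- a regime your sketch names but does not handle. In short: the independence identity and the upper half are right (modulo the arithmetic noted above); the lower half has a genuine gap at its center.
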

\par
We improve the upper bound in Theorem~\ref{thma} as follows. 
\par
\begin{theorem}
\label{thm:upper_bound}
Let $A_n$ be drawn uniformly at random from $\{-1,1\}^n$ and let $\e>0$ be real. Then, as $n\to\infty$,
\[
\Pr\Big[C_r(A_n)\le(1+\e)\sqrt{2n\log \tbinom{n}{r-1}}\quad\text{for all $r$ satisfying $2\le r\le n$}\Big]\to 1.
\]
\end{theorem}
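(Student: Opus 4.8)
The plan is to prove a high-probability upper bound on $C_r(A_n)$ that holds \emph{simultaneously} for all orders $r$ with $2\le r\le n$. The natural strategy is a union bound over all the relevant correlation sums, combined with a sharp Gaussian-type tail estimate for each individual sum, and finally a union over the values of $r$ so as to get a single statement covering all $r$ at once.

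\medskip

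First I would fix $r$ and count the number of correlation sums in play. A sum is specified by a shift vector $0\le u_1<u_2<\cdots<u_r<n$ together with a length $1\le m\le n-u_r$, so the total number of sums is at most $\binom{n}{r}\cdot n$, which for our purposes behaves like $\binom{n}{r-1}$ up to polynomial factors (this is where the $\binom{n}{r-1}$ rather than $\binom{n}{r}$ in the exponent ultimately comes from, after accounting for the freedom in $m$ and $u_r$). Next, for a single such sum $S=\sum_{j=1}^m a_{j+u_1}\cdots a_{j+u_r}$, I would establish that the summands form a sequence of $\pm1$ random variables that, while not independent, are sufficiently well-behaved that $S$ obeys a subgaussian tail bound of the form $\Pr[|S|\ge t]\le 2\exp(-t^2/(2m))\le 2\exp(-t^2/(2n))$. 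The cleanest route is to observe that each product $a_{j+u_1}\cdots a_{j+u_r}$ is a Rademacher variable, and although distinct products may share factors, one can either exhibit an appropriate martingale / exponential-moment argument or invoke the standard Hoeffding/Azuma bound after checking that the partial sums form a martingale with bounded increments. This gives the key per-sum estimate.

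\medskip

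I would then assemble the union bound. Setting $t=(1+\e)\sqrt{2n\log\binom{n}{r-1}}$, the single-sum tail becomes roughly $\exp(-(1+\e)^2\log\binom{n}{r-1})=\binom{n}{r-1}^{-(1+\e)^2}$, and multiplying by the number of sums (at most $\binom{n}{r-1}$ times a polynomial factor in $n$) leaves a probability of at most $\binom{n}{r-1}^{-(1+\e)^2+1}\cdot\mathrm{poly}(n)$. Since $-(1+\e)^2+1=-2\e-\e^2<0$, this bound is small provided $\binom{n}{r-1}$ is large enough to dominate the polynomial and the outer union over $r$; for the finitely many $r$ where $\binom{n}{r-1}$ is only polynomially large (i.e.\ $r$ very close to $2$ or to $n$), I would handle the estimate directly, noting that the threshold $\sqrt{2n\log\binom{n}{r-1}}$ can degenerate and must be treated with a little care, perhaps by replacing $\log\binom{n}{r-1}$ with $\max(\log\binom{n}{r-1},\log n)$ inside the intermediate steps. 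Finally I would sum the failure probabilities over all $r$ from $2$ to $n$; because each term decays like a negative power of $\binom{n}{r-1}$ and there are only $n$ values of $r$, the total tends to $0$ as $n\to\infty$, which is exactly the claim.

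\medskip

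The main obstacle I anticipate is twofold. The sharper part is getting the constant in the Gaussian tail exactly right — the factor $2$ under the square root and the $(1+\e)$ must survive the union bound, which means the per-sum tail exponent must genuinely be $t^2/(2n)$ with no loss, so a crude second-moment or Markov argument will not suffice and a clean exponential-moment (Hoeffding) bound is essential. The more delicate part is the \emph{uniformity over all $r$}, particularly the boundary regimes $r$ near $2$ and $r$ near $n$ where $\binom{n}{r-1}$ fails to be exponentially large and the normalisation $\sqrt{2n\log\binom{n}{r-1}}$ becomes small or even comparable to $\sqrt{n}$; there the naive union bound is too weak and one must argue more carefully, possibly absorbing these cases into the polynomial slack provided by $\e>0$. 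Controlling this single union bound across the full range $2\le r\le n$, rather than the restricted range $2\le r\le n/4$ of Theorem~\ref{thma}, is the crux of the improvement.
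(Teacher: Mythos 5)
Your overall architecture (per-sum subgaussian tail, union over shift tuples, union over $r$) matches the paper's, and the last two unions are handled essentially as you describe. But there is a genuine gap at the step you wave at with ``handle the estimate directly'' and ``absorb these cases into the polynomial slack provided by $\e>0$'': that slack does not exist. Take $r=2$ and a small $\e$. Your union is over roughly $\binom{n}{2}\cdot n\approx n^3$ sums, each with tail at most $2\exp\bigl(-(1+\e)^2\log n\bigr)=2n^{-(1+\e)^2}$, so the total failure probability is of order $n^{3-(1+\e)^2}$, which tends to \emph{infinity} for every $\e<\sqrt{3}-1$. Being more careful about the fact that short sums (small $m$) have better tails only improves this to about $n^{3-(1+\e)^2}/\log n$, still divergent. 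And these problematic cases are not a fringe you can treat separately: for each fixed small $\e$ they comprise every $r$ up to about $1+1/\e$, and for each such $r$ the claim $C_r(A_n)\le(1+\e)\sqrt{2n\log\binom{n}{r-1}}$ is exactly the sharp statement you are trying to prove (the constant is optimal by Theorem~\ref{thm:conv_almost_surely}), so there is no cruder argument to fall back on.

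The missing idea is a \emph{maximal inequality} replacing the union bound over the endpoint $m$. The paper first normalises $u_1=0$ by shifting the summation index, so that $C_r(A_n)$ becomes a maximum over $\binom{n-1}{r-1}$ shift tuples $(u_2,\dots,u_r)$ and, for each tuple, over all subintervals $[m_1,m_2]$ of the partial sums of a \emph{single} walk whose steps $a_ja_{j+u_2}\cdots a_{j+u_r}$ are genuinely mutually independent Rademacher variables (Lemma~\ref{lem:independence} --- so your hedging about non-independence and Azuma is unnecessary). The maximum over subintervals is the range $R_n$ of that walk, and Aistleitner's dyadic decomposition (Lemmas~\ref{lem:max_rw_dyadic} and~\ref{lem:large_deviation_max}) gives $\Pr[R_n>\lambda(1+\delta)]\le(\log n)\exp(-\lambda^2/(2n))$: the cost of taking the maximum over all $\sim n^2$ intervals is a factor $\log n$ and a multiplicative $(1+\delta)$ in the threshold, not a factor $n^2$. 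Writing $1+\e=\sqrt{1+\gamma}\,(1+\delta)$ then yields $\Pr[C_r(A_n)>\lambda(1+\e)]\le(\log n)\binom{n-1}{r-1}/\binom{n}{r-1}^{1+\gamma}<(\log n)/\binom{n}{r-1}^{\gamma}$, which is summable over $2\le r\le n$ exactly along the lines of your final step. Without some such chaining or maximal inequality your argument cannot recover the sharp constant for small $r$.
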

\par
In view of Theorem~\ref{thm:conv_almost_surely}, the bound in Theorem~\ref{thm:upper_bound} is essentially best possible. We also note that Theorem~\ref{thm:upper_bound} gives the currently strongest existence result. (The computation of the asymptotic behaviour of the correlation measures of individual binary sequences is a notoriously difficult problem and, in the light of Theorem~\ref{thm:conv_almost_surely}, the currently known results tend to be unsatisfying, see for example~\cite[Theorem~1]{Mauduit1997}.)
\par
We shall prove Theorem~\ref{thm:upper_bound} in Section~\ref{sec:typical_upper_bound}. In Section~\ref{sec:expected_value}, we shall determine the limit of the expected value of~\eqref{eqn:_question_limit_distribution} (Proposition~\ref{pro:ECr}). We shall then use this result in Section~\ref{sec:almost_sure_conv} to deduce Theorem~\ref{thm:conv_almost_surely}.
\par
We now turn to lower bounds for $C_r(A_n)$. It is known that
\[
\min_{A_n\in\{-1,1\}^n}C_r(A_n)=1\quad\text{for odd $r$},
\]
which arises from the alternating sequence $(1,-1,1,-1,\dots)$. Therefore, interesting results can only be expected for even $r$. Indeed the following result was established by Alon, Kohayakawa, Mauduit, Moreira, and R\"odl~\cite{Alon2006}.
\begin{abctheorem}[{\cite[Theorem~1.1]{Alon2006}}]
\label{thmb}
Let $r$ and $n$ be positive integers with $r\le n/2$. Then
\[
C_{2r}(A_n)>\sqrt{\frac{1}{2}\bigg\lfloor\frac{n}{2r+1}\bigg\rfloor}
\]
for all $A_n\in\{-1,1\}^n$.
\end{abctheorem}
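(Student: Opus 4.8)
The plan is to deduce the bound from Welch's inequality: for $M$ unit vectors $x_1,\dots,x_M$ in a $d$-dimensional real inner product space one has
\[
\max_{i\ne j}\abs{\langle x_i,x_j\rangle}^2\ge\frac{M-d}{d(M-1)}.
\]
The task is therefore to manufacture a family of vectors, built from $A_n$, whose pairwise inner products are genuine order-$2r$ correlation sums, so that this bound transfers to a lower bound on $C_{2r}(A_n)$.

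Set $L=\lfloor n/(2r+1)\rfloor$ (the case $L=0$ being trivial), and put $N=L$ and $M=\lfloor (n+1-L)/r\rfloor$. For each $u\in\{0,1,\dots,M-1\}$ define $v_u\in\R^N$ by $v_u=\big(\prod_{i=0}^{r-1}a_{j+u+iM}\big)_{j=1}^N$, so that the $r$ factors are spaced $M$ apart. Each $v_u$ lies in $\{-1,1\}^N$, hence has squared norm $N$, and the $v_u/\sqrt N$ are unit vectors in $\R^N$. For $u\ne u'$,
\[
\langle v_u,v_{u'}\rangle=\sum_{j=1}^N\;\prod_{i=0}^{r-1}a_{j+u+iM}\prod_{i=0}^{r-1}a_{j+u'+iM},
\]
which is an order-$2r$ correlation sum of $A_n$ exactly when the $2r$ shifts $u+iM$ and $u'+iM$ ($0\le i\le r-1$) are pairwise distinct.

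The crux --- and the step I expect to require the most care --- is precisely this distinctness. A coincidence $u+iM=u'+i'M$ forces $u-u'=(i'-i)M$; the case $i=i'$ gives $u=u'$, and $i\ne i'$ gives $\abs{u-u'}\ge M$, whereas $\abs{u-u'}\le M-1$. Hence no coincidence occurs, no factor $a_\ell^2=1$ is created, the sum does not degenerate to a lower-order correlation, and, after verifying that the largest shift $rM-1$ is $<n$ and that the window length $N$ satisfies $N\le n-(rM-1)$ --- both consequences of $n\ge(2r+1)L$ --- one obtains $\abs{\langle v_u,v_{u'}\rangle}\le C_{2r}(A_n)$ for all $u\ne u'$.

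It remains to optimise the parameters. Applying Welch's bound to $v_0/\sqrt N,\dots,v_{M-1}/\sqrt N$ in dimension $d=N$ gives
\[
C_{2r}(A_n)^2\ge\max_{u\ne u'}\abs{\langle v_u,v_{u'}\rangle}^2\ge N^2\cdot\frac{M-N}{N(M-1)}=L\cdot\frac{M-L}{M-1}.
\]
The choice of $M$ ensures $M\ge 2L$, and since $M\mapsto(M-L)/(M-1)$ is increasing this yields $(M-L)/(M-1)\ge L/(2L-1)>\tfrac12$, so $C_{2r}(A_n)^2>L/2$, as required. The genuinely free choices are the spacing and the counts $M$ and $N$; the remainder is the bookkeeping that keeps every index below $n$.
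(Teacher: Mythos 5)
Your proposal is correct and follows essentially the same route as the paper: build vectors whose coordinates are products of the sequence over pairwise disjoint $r$-element shift sets (your arithmetic progressions $\{u,u+M,\dots,u+(r-1)M\}$ are one explicit choice of the paper's disjoint sets), note that the pairwise inner products are order-$2r$ correlation sums, and apply the Welch bound with $k=1$ together with $M\ge 2L$ to conclude $C_{2r}(A_n)^2>L/2$. The parameters $L=\ell$ and $M=m$ and the final estimate coincide with the paper's, so there is nothing to add.
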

\par
Theorem~\ref{thmb} gives an affirmative answer to a problem due to Cassaigne, Mauduit, and S\'ark\"ozy~\cite[Problem~2]{Cassaigne2002}, which was suspected to be `really difficult' in~\cite[p.~109]{Cassaigne2002}. While the proof of Theorem~\ref{thmb} in~\cite{Alon2006} is quite involved, we shall show that Theorem~\ref{thmb} is a simple consequence of the so-called Welch bound~\cite{Welch1974}. This bound is an elementary result on the maximum nontrivial scalar products over a set of vectors.
\par
We also establish, as another consequence of the Welch bound, the following result, which was proved in~\cite{Alon2006} without an explicit lower bound for~$c_k$.
\begin{theorem}
\label{thm:lower_bound_max}
There exists a sequence of real numbers $c_k$, satisfying $c_k>1/9$ for each $k\ge 3$ and $c_k\to1/\sqrt{6e}=0.2476\dots$ as $k\to\infty$, such that for all positive integers $s$ and $n$ with $s\le n/3$, we have
\[
\max\big\{C_2(A_n),C_4(A_n),\dots,C_{2s}(A_n)\big\}>c_n\sqrt{sn}
\]
for all $A_n\in\{-1,1\}^n$.
\end{theorem}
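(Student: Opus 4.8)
The plan is to deduce Theorem~\ref{thm:lower_bound_max} from the Welch bound, which in the form I shall use states that for any unit vectors $v_1,\dots,v_M$ in $\R^N$ and any positive integer $k$,
\[
\max_{i\ne j}\,\abs{\langle v_i,v_j\rangle}^{2k}\ge\frac{1}{M-1}\left(\frac{M}{\binom{N+k-1}{k}}-1\right).
\]
First I would extract from $A_n$ a large family of unit vectors whose off-diagonal inner products are controlled by the even-order correlation measures. Fix $s\le n/3$, put $m\approx n/3$, and take all $s$-element subsets $S$ of the $L=n-m\approx 2n/3$ shifts $\{0,1,\dots,L-1\}$, defining $y_S\in\{-1,1\}^m$ by $(y_S)_j=\prod_{u\in S}a_{j+u}$ and $v_S=y_S/\sqrt m$. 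These are $M=\binom{L}{s}$ unit vectors in $\R^m$. For $S\ne S'$ the common shifts cancel, so $\langle y_S,y_{S'}\rangle=\sum_{j=1}^m\prod_{u\in S\triangle S'}a_{j+u}$, a correlation sum whose order $\abs{S\triangle S'}=2(s-\abs{S\cap S'})$ is even and lies in $\{2,4,\dots,2s\}$. Hence, writing $\Gamma=\max\{C_2(A_n),\dots,C_{2s}(A_n)\}$, we get $\abs{\langle v_S,v_{S'}\rangle}\le \Gamma/m$ for all $S\ne S'$.

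Next I would apply the Welch bound with $N=m$ and $k=s$, which gives
\[
\Gamma^{2s}\ge m^{2s}\cdot\frac{1}{M-1}\left(\frac{M}{\binom{m+s-1}{s}}-1\right).
\]
Because $m+s-1\le 2n/3\approx L$, the count $M=\binom{L}{s}$ exceeds $\binom{m+s-1}{s}$; and when $s$ is bounded away from $n/3$ the ratio is so large that the bracketed factor equals $(1-o(1))/\binom{m+s-1}{s}$. Thus, up to a factor tending to $1$,
\[
\Gamma^{2s}\ge\frac{m^{2s}}{\binom{m+s-1}{s}}.
\]

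Finally I would estimate the binomial coefficient, and this is where the constant $1/\sqrt{6e}$ is born. Using $\binom{m+s-1}{s}\le(m+s)^s/s!$ together with $s!\ge(s/e)^s$ gives $\binom{m+s-1}{s}^{1/s}\le e(m+s)/s$, whence $\Gamma^2\ge m^2 s/\big(e(m+s)\big)$. Since $s\le n/3$ and $m\approx n/3$ force $m+s\le 2n/3$, this yields $\Gamma^2\ge \frac{(n/3)^2 s}{e\cdot 2n/3}=\frac{ns}{6e}$, i.e.\ $\Gamma\ge\frac{1}{\sqrt{6e}}\sqrt{ns}$ in the limit. Retaining the $o(1)$ and Stirling corrections explicitly produces a sequence $c_n$ with $c_n\to1/\sqrt{6e}$; bounding these corrections crudely but non-asymptotically, and disposing of the few small cases directly (where $\Gamma\ge1$ already suffices), should give $c_k>1/9$ for all $k\ge 3$.

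I expect the main obstacle to be carrying the second step through \emph{uniformly} in $s$. When $s$ approaches $n/3$ the gap between $M=\binom{L}{s}$ and $\binom{m+s-1}{s}$ shrinks, so the factor $\big(M/\binom{m+s-1}{s}-1\big)/(M-1)$ is no longer comfortably close to $1/\binom{m+s-1}{s}$, and one must either shrink $m$ slightly—trading against the main term $m^{2s}$—or analyse the binomial ratio precisely. Managing this trade-off while keeping $c_n$ explicit enough to certify both $c_n\to1/\sqrt{6e}$ and $c_k>1/9$ is the real work; the Welch bound itself supplies all of the structural content, replacing the involved combinatorial argument of~\cite{Alon2006}.
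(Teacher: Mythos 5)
Your construction and use of the Welch bound are exactly those of the paper: the same family of $\pm1$ vectors indexed by $s$-subsets of roughly $2n/3$ shifts, the same observation that distinct vectors have inner products bounded by $\max\{C_2,\dots,C_{2s}\}$, the same application of the bound with $k=s$, and the same Stirling-type estimate producing $1/\sqrt{6e}$. The one place where you diverge is precisely the obstacle you flag at the end, and the paper's bookkeeping dissolves it: rather than approximating $\frac{1}{M-1}\bigl(\frac{M}{B}-1\bigr)$ by $(1-o(1))/B$ with $B=\binom{\ell+s-1}{s}$ (which requires $M/B\to\infty$ and genuinely fails near $s=n/3$, where $M/B\to 4$), the paper keeps $M-1$ as the denominator of the main term, bounds $\ell^{2s}/(M-1)\ge\ell^{2s}/M>(sn/81)^s$ directly via~\eqref{eqn:binomial_bound} (note that $M=\binom{n-\ell+1}{s}$ and $B$ obey the same upper bound $(2en/(3s))^s$, so no constant is lost by dividing by $M$ instead of $B$), and then only needs the bracket $\frac{M}{B}-1$ to be at least $1$. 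That last point is settled uniformly in $s$ by a short unimodality argument: $f(s)=\binom{n-\ell+1}{s}/\binom{\ell+s-1}{s}$ is increasing then decreasing, so its minimum over $1\le s\le\lfloor n/3\rfloor$ is $\min\{f(1),f(\lfloor n/3\rfloor)\}$, and both endpoint values exceed $2$. This is the ``precise analysis of the binomial ratio'' you anticipated; no shrinking of the vector length $m$ is needed, and the trade-off you worried about never arises. If you complete your write-up this way, you recover the paper's proof verbatim.
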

\par
Theorems~\ref{thmb} and~\ref{thm:lower_bound_max} will be proved in Section~\ref{sec:minimal}.


\section{Typical upper bound}
\label{sec:typical_upper_bound}

In this section, we shall prove Theorem~\ref{thm:upper_bound}. The key ingredient in the proof will be an estimate for the range of a random walk. Let $X_1,\dots,X_n$ be independent random variables, each taking the values $-1$ or~$1$, each with probability $1/2$. Define the random variable
\begin{equation}
R_n=\max_{1\le m_1\le m_2\le n}\Biggabs{\sum_{j=m_1}^{m_2}X_j},   \label{eqn:range}
\end{equation}
which is called the \emph{range} of the random walk with steps $X_1,X_2,\dots$.
\par
We begin with a minor generalisation of a lemma due to Aistleitner~\cite[Lemma~2.3]{Aistleitner2013}.
\begin{lemma}
\label{lem:max_rw_dyadic}
Let $p$ be a nonnegative integer and let $n$ be an integer of the form
\[
\text{$j2^m$, where $j,m\in\Z$, $2^p<j\le 2^{p+1}$, and $m\ge 1$}.
\]
Then, for $\lambda>2\sqrt{n}$,
\[
\Pr\Big[R_n>\lambda(1+12\cdot 2^{-p/2})\Big]\le 2^{2p+4}\exp\bigg(-\frac{\lambda^2}{2n}\bigg).
\]
\end{lemma}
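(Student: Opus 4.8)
Following Aistleitner's idea, the plan is to reduce the range to the partial sums $S_k=X_1+\cdots+X_k$ (with $S_0=0$), so that
\[
R_n=\max_{0\le a\le b\le n}\abs{S_b-S_a},
\]
and then to split every interval sum into a coarse contribution carried by a dyadic grid and a fine within-block remainder. Since $n=j2^m$, I would partition $\{0,1,\dots,n\}$ into the $j$ blocks delimited by the boundaries $t_i=i2^m$ for $0\le i\le j$. For arbitrary $0\le a\le b\le n$, let $a'$ and $b'$ be the largest boundaries with $a'\le a$ and $b'\le b$; the identity $S_b-S_a=(S_{b'}-S_{a'})+(S_b-S_{b'})-(S_a-S_{a'})$ then yields $R_n\le R^{(1)}+2R^{(2)}$, where
\[
R^{(1)}=\max_{0\le i\le i'\le j}\abs{S_{t_{i'}}-S_{t_i}}
\qquad\text{and}\qquad
R^{(2)}=\max_{0\le i<j}\ \max_{0\le l\le 2^m}\abs{S_{t_i+l}-S_{t_i}},
\]
because each remainder $S_b-S_{b'}$ and $S_a-S_{a'}$ is a partial sum inside a single block of length at most $2^m$.

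Next I would tune the threshold split so that $\{R_n>\lambda(1+12\cdot2^{-p/2})\}\subseteq\{R^{(1)}>\lambda\}\cup\{R^{(2)}>6\cdot2^{-p/2}\lambda\}$ and bound each event by a union bound over the grid. For $R^{(1)}$ there are at most $\binom{j+1}{2}\le 2^{2p+2}$ boundary pairs (using $j\le 2^{p+1}$), and each difference $S_{t_{i'}}-S_{t_i}$ is a sum of at most $n$ independent signs, so Hoeffding's inequality gives $\Pr[R^{(1)}>\lambda]\le 2^{2p+3}\exp(-\lambda^2/(2n))$. For $R^{(2)}$ the decisive gain is that every block has length $2^m=n/j<n\,2^{-p}$: applying the reflection principle together with Hoeffding to each of the $j\le 2^{p+1}$ blocks gives a tail $4\exp\!\big(-s^2/(2\cdot 2^m)\big)$ at level $s=6\cdot2^{-p/2}\lambda$, and here the factor $2^{-p}$ in $s^2$ cancels the $2^{-p}$ in the block length, forcing the exponent to be at least $36\cdot\lambda^2/(2n)$. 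Hence $\Pr[R^{(2)}>6\cdot2^{-p/2}\lambda]\le 4j\exp(-\lambda^2/(2n))\le 2^{p+3}\exp(-\lambda^2/(2n))$.

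Adding the two contributions and using $2^{p+3}\le 2^{2p+3}$ for $p\ge 0$ gives the bound $(2^{2p+3}+2^{2p+3})\exp(-\lambda^2/(2n))=2^{2p+4}\exp(-\lambda^2/(2n))$, which is exactly the claim. The one place that needs genuine care is the within-block estimate for $R^{(2)}$: I must invoke the correct maximal inequality (the reflection principle for the symmetric walk, which is responsible for the constant $4$) and then verify that the short block length $2^m<n\,2^{-p}$ really does convert the modest threshold $6\cdot2^{-p/2}\lambda$ into an exponent dominating $\lambda^2/(2n)$, so that the fine-scale fluctuations become negligible after the union bound. The hypothesis $\lambda>2\sqrt{n}$ and the generous constant $12$ supply ample slack for this bookkeeping and for absorbing the two-sided Hoeffding and reflection constants into the single clean prefactor $2^{2p+4}$.
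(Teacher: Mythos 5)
Your proof is correct, but it does not follow the route the paper points to: the paper gives no proof of this lemma at all, instead asserting that it follows from ``obvious modifications'' to Aistleitner's Lemma~2.3, whose proof is a full multiscale dyadic chaining argument (interval sums are decomposed into dyadic blocks at \emph{every} scale $k=0,1,\dots,m$, with a geometrically decaying threshold $\sim 2^{-k/2}$ assigned to each scale and a union bound over the roughly $j2^k$ blocks at scale $k$; summing the geometric series is what produces the constant $12\cdot 2^{-p/2}$). You replace all of that with a two-scale argument: one coarse grid of $j\le 2^{p+1}$ boundaries handled by a union bound over the $\binom{j+1}{2}\le 2^{2p+2}$ pairs, plus a single within-block maximal estimate via the reflection principle, which absorbs the entire fine-scale maximum in one step with no further decomposition. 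The arithmetic checks out: the split $R_n\le R^{(1)}+2R^{(2)}$ and the thresholds $\lambda$ and $6\cdot 2^{-p/2}\lambda$ give the stated event inclusion; the coarse term contributes $2\cdot 2^{2p+2}\exp(-\lambda^2/(2n))$; and since each block has length $2^m=n/j<n2^{-p}$, the exponent for the fine term is at least $36\lambda^2/(2n)$, so the $j\le 2^{p+1}$ blocks contribute at most $2^{p+3}\exp(-\lambda^2/(2n))$, and $2^{2p+3}+2^{2p+3}=2^{2p+4}$ as required. What each approach buys: yours is shorter, self-contained, and makes the hypothesis $\lambda>2\sqrt n$ superfluous, at the cost of needing the reflection principle (or L\'evy's maximal inequality), which is genuinely necessary here --- a naive union bound over the up to $2^m$ positions inside a block would destroy the estimate; the chaining proof avoids any maximal inequality and is the one the paper's citation actually licenses. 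Since you are not reproducing Aistleitner's argument, you should state and prove (or precisely cite) the maximal inequality $\Pr[\max_{0\le l\le N}|W_l|\ge s]\le 4\exp(-s^2/(2N))$ rather than leaving it as a plan, as it is the one non-elementary ingredient.
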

\par
Aistleitner's lemma~\cite[Lemma~2.3]{Aistleitner2013} is obtained by setting $p=10$ in Lemma~\ref{lem:max_rw_dyadic}. The general version can be proved by applying obvious modifications to the proof of~\cite[Lemma~2.3]{Aistleitner2013}, which is proved using a dyadic decomposition technique. (Aistleitner's lemma has the additional assumption that $n$ is sufficiently large, which however is not required in the proof.)
\par
We now proceed similarly as in~\cite{Aistleitner2013} and prove the following lemma, which holds for general $n$.
\begin{lemma}
\label{lem:large_deviation_max}
Let $\delta>0$ be real. Then, there exists a constant $n_0=n_0(\delta)$, such that for all $n\ge n_0$ and all $\lambda>2\sqrt{n}$,
\[
\Pr\Big[R_n>\lambda(1+\delta)\Big]\le (\log n)\exp\bigg(-\frac{\lambda^2}{2n}\bigg).
\]
\end{lemma}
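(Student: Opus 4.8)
The plan is to deduce Lemma~\ref{lem:large_deviation_max} from Lemma~\ref{lem:max_rw_dyadic} by approximating an arbitrary $n$ from above by an integer $n'$ of the special dyadic form, and then exploiting the slack built into the factor $1+\delta$.

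First, given $\delta>0$, I would fix a (large) integer $p=p(\delta)$, to be pinned down below, and for each large $n$ let $n'$ be the smallest integer with $n'\ge n$ of the form $j2^m$ with $2^p<j\le 2^{p+1}$ and $m\ge 1$. Choosing $m$ so that $2^{p+m}<n\le 2^{p+m+1}$, the admissible values $j2^m$ fill the block $(2^{p+m},2^{p+m+1}]$ with spacing $2^m<n2^{-p}$, so such an $n'$ exists once $n$ exceeds a bound depending only on $p$, and $n\le n'<n(1+2^{-p})$. Since the range is monotone in the length of the walk (the maximum in~\eqref{eqn:range} runs over a larger index set), $R_n\le R_{n'}$ on the underlying sequence space, whence $\Pr[R_n>\lambda(1+\delta)]\le\Pr[R_{n'}>\lambda(1+\delta)]$.

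The crux is to apply Lemma~\ref{lem:max_rw_dyadic} to $n'$ so that the resulting exponent $\exp(-\lambda^2/(2n'))$ is at least as strong as the target $\exp(-\lambda^2/(2n))$, despite $n'>n$. The trick is to feed the lemma not $\lambda$ but the larger value $\lambda'=\lambda(1+\delta)/(1+12\cdot 2^{-p/2})$, chosen so that $\lambda'(1+12\cdot 2^{-p/2})=\lambda(1+\delta)$. Then Lemma~\ref{lem:max_rw_dyadic} gives $\Pr[R_{n'}>\lambda(1+\delta)]\le 2^{2p+4}\exp(-\lambda'^2/(2n'))$, and, writing $\rho=(1+\delta)/(1+12\cdot 2^{-p/2})$, the desired bound $\exp(-\rho^2\lambda^2/(2n'))\le\exp(-\lambda^2/(2n))$ amounts exactly to $\rho^2\ge n'/n$. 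I would secure this by choosing $p$ so large (depending only on $\delta$) that $(1+\delta)^2\ge(1+12\cdot 2^{-p/2})^2(1+2^{-p})$, which is possible since the right-hand side tends to $1$ as $p\to\infty$; combined with $n'/n<1+2^{-p}$ this yields $\rho^2\ge 1+2^{-p}>n'/n$. The same inequality also delivers the hypothesis $\lambda'>2\sqrt{n'}$ of the lemma, since $\lambda'=\rho\lambda>2\rho\sqrt{n}\ge 2\sqrt{n'}$.

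With $p$ now fixed, the prefactor $2^{2p+4}$ is an absolute constant, so $2^{2p+4}\le\log n$ for all $n\ge n_0(\delta)$; enlarging $n_0$ to guarantee the existence of $n'$ then gives $\Pr[R_n>\lambda(1+\delta)]\le 2^{2p+4}\exp(-\lambda^2/(2n))\le(\log n)\exp(-\lambda^2/(2n))$ for all $\lambda>2\sqrt n$. The one step demanding genuine care—and where a naive argument collapses—is the exponent bookkeeping above: replacing $n$ by $n'$ while keeping $\lambda$ fixed leaves a correction $\exp(\tfrac{\lambda^2}{2}(\tfrac1n-\tfrac1{n'}))$ that grows out of control for large $\lambda$, and it is precisely the reallocation of the $1+\delta$ slack into the enlarged threshold $\lambda'$ that turns this loss into a gain. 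I expect this reallocation to be the main obstacle to get right, together with verifying the elementary inequality on $p$ and the bound $n'<n(1+2^{-p})$.
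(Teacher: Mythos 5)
Your proposal is correct and follows essentially the same route as the paper: approximate $n$ from above by an $\hat n$ of the dyadic form with $\hat n/n\le 1+2^{-p}$, use monotonicity of $R_n$, and absorb the loss in the exponent by feeding Lemma~\ref{lem:max_rw_dyadic} an inflated threshold paid for out of the $1+\delta$ slack (the paper inflates $\lambda$ by $\sqrt{1+2^{-p}}$ and requires $(1+12\cdot 2^{-p/2})\sqrt{1+2^{-p}}\le 1+\delta$, which is your condition in a slightly different packaging). The only cosmetic difference is that the paper lets $p=\lfloor\tfrac12\log\log n\rfloor$ grow with $n$ while you fix $p=p(\delta)$; both make $2^{2p+4}\le\log n$ for $n\ge n_0(\delta)$.
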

\begin{proof}
Let $p$ be a positive integer and let $\hat n$ be the smallest integer that satisfies $\hat n\ge n$ and is of the form
\[
\text{$j2^m$, where $j,m\in\Z$, $2^p<j\le 2^{p+1}$, and $m\ge 1$}.
\]
We readily verify that
\begin{equation}
\frac{\hat n}{n}\le 1+\frac{1}{2^p}\quad\text{for $n\ge 2^{p+1}$}.   \label{eqn:hatn_n}
\end{equation}
Let $n\ge 2^{p+1}$ and $\lambda>2\sqrt{n}$, so that $\lambda\sqrt{1+2^{-p}}>2\sqrt{\hat n}$. Then
\begin{align*}
\Pr\Big[R_n>\lambda(1+12\cdot 2^{-p/2})\sqrt{1+2^{-p}}\Big]
\le &\Pr\Big[R_{\hat n}>\lambda(1+12\cdot 2^{-p/2})\sqrt{1+2^{-p}}\Big]\\
\le & \;2^{2p+4}\exp\bigg(-\frac{\lambda^2(1+2^{-p})}{2\hat n}\bigg)\\
\le & \;2^{2p+4}\exp\bigg(-\frac{\lambda^2}{2n}\bigg),
\end{align*}
by Lemma~\ref{lem:max_rw_dyadic} and~\eqref{eqn:hatn_n}. For $n>2$, we take $p=p(n)=\lfloor\frac{1}{2}\log\log n\rfloor$, so that $n\ge 2^{p+1}$. Moreover
\[
(1+12\cdot 2^{-p/2})\sqrt{1+2^{-p}}\le 1+\delta
\]
and $2^{2p+4}\le \log n$ for all $n\ge n_0$, where $n_0$ depends only on $\delta$. This completes the proof.
\end{proof}
\par
Before proving Theorem~\ref{thm:upper_bound}, we record the following elementary, albeit very useful, fact. 
\begin{lemma}
\label{lem:independence}
Let $X_1,X_2,\dots,X_n$ be mutually independent random variables, each taking each of the values $-1$ and $1$ with probability $1/2$ and let $u_1,\dots,u_r$ be integers satisfying
\[
0\le u_1<u_2<\cdots<u_r<n. 
\]
Then the $n-u_r$ products
\[
X_{1+u_1}X_{1+u_2}\cdots X_{1+u_r},\dots,X_{n-u_r+u_1}X_{n-u_r+u_2}\cdots X_n
\]
are mutually independent.
\end{lemma}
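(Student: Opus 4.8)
The plan is to use the standard characterisation of mutual independence for uniform sign variables: if $Z_1,\dots,Z_t$ each take each of the values $-1$ and $1$ with probability $1/2$, then they are mutually independent if and only if $\E\big[\prod_{i\in T}Z_i\big]=0$ for every nonempty subset $T\subseteq\{1,\dots,t\}$. (The subset products are exactly the nontrivial characters of $\{-1,1\}^t$, so all of these expectations vanishing is equivalent to the joint law being uniform on $\{-1,1\}^t$, which is precisely mutual independence of uniform $\pm1$ variables.) Writing $P_j=X_{j+u_1}X_{j+u_2}\cdots X_{j+u_r}$ for $j=1,\dots,n-u_r$, each $P_j$ is itself uniform on $\{-1,1\}$, so it suffices to prove $\E\big[\prod_{j\in T}P_j\big]=0$ for every nonempty $T\subseteq\{1,\dots,n-u_r\}$.

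First I would expand the product over $T$. We have $\prod_{j\in T}P_j=\prod_i X_i^{e_i}$, where $e_i$ counts the pairs $(j,k)$ with $j\in T$, $1\le k\le r$, and $j+u_k=i$; since $X_i^2=1$, only the parity of each $e_i$ matters. By mutual independence of the $X_i$, the expectation factors as $\prod_i\E[X_i^{e_i}]$, and $\E[X_i^{e_i}]$ equals $1$ when $e_i$ is even and $0$ when $e_i$ is odd. Hence $\E\big[\prod_{j\in T}P_j\big]=0$ as soon as some exponent $e_i$ is odd, and the whole statement reduces to the combinatorial claim that for every nonempty $T$ there is an index $i$ appearing an odd number of times among the variables composing $\prod_{j\in T}P_j$.

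The crux is to exhibit such an index, and here I would exploit the strict ordering $u_1<u_2<\cdots<u_r$. Let $j^\ast=\max T$ and consider $i=j^\ast+u_r$. A pair $(j,k)$ with $j\in T$ contributes to $e_i$ exactly when $j+u_k=j^\ast+u_r$, i.e.\ $u_k=u_r+(j^\ast-j)$. For $j=j^\ast$ this forces $u_k=u_r$, so $k=r$, giving precisely one contribution; for $j<j^\ast$ it forces $u_k>u_r$, which is impossible; and $j>j^\ast$ cannot occur since $j^\ast=\max T$. Thus $e_{j^\ast+u_r}=1$ is odd, which is exactly what is needed. I expect this maximality argument to be the only real content: the passage to subset-product expectations is routine, and the strict ordering of the shifts is what guarantees that the largest shifted index of the largest element of $T$ is hit exactly once and therefore cannot be cancelled.
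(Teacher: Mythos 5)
Your proof is correct. Note that the paper does not actually supply an argument for this lemma: it records the statement as an ``elementary, albeit very useful, fact'' and only cites Mercer's proof of the case $r=2$, so there is no in-paper proof to compare against step by step. Your route --- identifying mutual independence of uniform $\pm1$ variables with the vanishing of all nontrivial subset-product expectations, and then killing each such expectation by locating an index hit an odd number of times --- is the standard Fourier/character argument and is exactly what is needed to handle general $r$. The one substantive point, that $e_{j^\ast+u_r}=1$ where $j^\ast=\max T$, is argued correctly: for $j\in T$ the equation $j+u_k=j^\ast+u_r$ forces $u_k=u_r+(j^\ast-j)\ge u_r$, which by the ordering $u_1<\cdots<u_r$ is possible only for $j=j^\ast$ and $k=r$. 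The auxiliary facts you invoke (each $P_j$ is itself uniform on $\{-1,1\}$; the expectation factors over the independent $X_i$ with $\E[X_i^{e_i}]\in\{0,1\}$ according to the parity of $e_i$) are all justified. This is a complete and self-contained proof of the lemma as stated, arguably more informative than the paper's citation, and it generalises without change to products indexed by arbitrary distinct shifts.
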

\par
For $r=2$, a formal proof of Lemma~\ref{lem:independence} is provided by Mercer~\cite[Proposition~1.1]{Mercer2006}.
\par
We now give a proof of Theorem~\ref{thm:upper_bound}. In this proof and in the remainder of this paper we make repeated use of the elementary bound
\begin{equation}
\bigg(\frac{n}{k}\bigg)^k\le\binom{n}{k}\le\bigg(\frac{en}{k}\bigg)^k \quad\text{for $k,n\in\Z$ satisfying $1\le k\le n$}.   \label{eqn:binomial_bound}
\end{equation}
\begin{proof}[Proof of Theorem~\ref{thm:upper_bound}]
Write $A_n=(a_1,a_2,\dots,a_n)$ and notice that $C_r(A_n)$ can be rewritten as
\begin{equation}
C_r(A_n)=\max_{0<u_2<\cdots<u_r<n}\;\max_{1\le m_1\le m_2\le n-u_r}\Biggabs{\sum_{j=m_1}^{m_2}a_ja_{j+u_2}\cdots a_{j+u_r}}.   \label{eqn:C_r_rewritten}
\end{equation}
Let $r$ be an integer satisfying $2\le r\le n$ and let $u_2,u_3,\dots,u_r$ be integers satisfying
\begin{equation}
0<u_2<\cdots<u_r<n.   \label{eqn:u_tuples}
\end{equation}
Write
\[
\lambda=\sqrt{2n\log \tbinom{n}{r-1}}.
\]
Then, in view of Lemma~\ref{lem:independence}, the probability
\begin{equation}
\Pr\Bigg[\max_{1\le m_1\le m_2\le n-u_r}\Biggabs{\sum_{j=m_1}^{m_2}a_ja_{j+u_2}\cdots a_{j+u_r}}>\lambda(1+\e)\Bigg]   \label{eqn:prob_max_corr}
\end{equation}
is at most $\Pr[R_n>\lambda(1+\e)]$ with $R_n$ defined as in~\eqref{eqn:range}. Write $1+\e=\sqrt{1+\gamma}(1+\delta)$ for some $\gamma,\delta>0$. By Lemma~\ref{lem:large_deviation_max}, there is a constant $n_0$, depending only on $\delta$, such that for all $n\ge n_0$, the probability~\eqref{eqn:prob_max_corr} is at most
\[
(\log n)\exp\bigg(-\frac{\lambda^2(1+\gamma)}{2n}\bigg)=\frac{\log n}{\binom{n}{r-1}^{1+\gamma}}.
\]
Summing over all possible tuples $(u_2,u_3,\dots,u_r)$ satisfying~\eqref{eqn:u_tuples}, we see from~\eqref{eqn:C_r_rewritten} that, for all $n\ge n_0$,
\begin{align}
\Pr\big[C_r(A_n)>\lambda(1+\e)\big]&\le \frac{(\log n)\binom{n-1}{r-1}}{\binom{n}{r-1}^{1+\gamma}}   \nonumber\\
&<\frac{\log n}{\binom{n}{r-1}^\gamma}.   \label{eqn:Pr_Cr_bound}
\end{align}
To prove the theorem, it is enough to show that, as $n\to\infty$,
\[
\sum_{r=2}^n\Pr\big[C_r(A_n)>\lambda(1+\e)\big]\to 0.
\]
From~\eqref{eqn:Pr_Cr_bound}, for $n\ge n_0$, the left hand side is at most
\[
\sum_{k=1}^{n-1}\frac{\log n}{\binom{n}{k}^\gamma}.
\]
Let $m$ be an integer such that $m\gamma>1$. Then, for $n\ge m$, this last expression is at most
\begin{align*}
2\sum_{k=1}^{m-1}\frac{\log n}{\binom{n}{k}^\gamma}+2\sum_{k=m}^{\lfloor n/2\rfloor}\frac{\log n}{\binom{n}{k}^\gamma}&\le \frac{2m\log n}{n^\gamma}+\frac{n\log n}{\binom{n}{m}^\gamma}\\
&\le\frac{2m\log n}{n^\gamma}+\frac{m^{m\gamma}\log n}{n^{m\gamma-1}},
\end{align*}
using~\eqref{eqn:binomial_bound}. Since $\gamma>0$ and $m\gamma>1$, the right hand side tends to zero as $n\to\infty$, as required.
\end{proof}


\section{Asymptotic expected value}
\label{sec:expected_value}

In this section, we prove the following result, which is a key step in the proof of Theorem~\ref{thm:conv_almost_surely}.
\begin{proposition}
\label{pro:ECr}
Let $A_n$ be drawn uniformly at random from $\{-1,1\}^n$. Then, as $n\to\infty$,
\[
\frac{\E\big[C_r(A_n)\big]}{\sqrt{2n\log \binom{n}{r-1}}}\to 1.
\]
\end{proposition}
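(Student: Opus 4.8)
The plan is to prove the two one-sided bounds $\limsup_{n\to\infty}\E[C_r(A_n)]/\lambda\le1$ and $\liminf_{n\to\infty}\E[C_r(A_n)]/\lambda\ge1$ separately, where I abbreviate $\lambda=\sqrt{2n\log\binom{n}{r-1}}$ and treat $r\ge2$ as fixed; since $\log\binom{n}{r-1}\sim(r-1)\log n$ we have $\lambda\sim\sqrt{2(r-1)n\log n}$, and in particular $n/\lambda=O(\sqrt n)$ and $\lambda^2/(2n)=\log\binom{n}{r-1}$. For the upper bound I would write $\E[C_r(A_n)]=\int_0^\infty\Pr[C_r(A_n)>t]\,dt$ and, for fixed $\e>0$, split at $(1+\e)\lambda$, so that $\E[C_r(A_n)]\le(1+\e)\lambda+\int_{(1+\e)\lambda}^\infty\Pr[C_r(A_n)>t]\,dt$. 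As in the proof of Theorem~\ref{thm:upper_bound}, the rewriting~\eqref{eqn:C_r_rewritten}, a union bound over the $\binom{n-1}{r-1}$ tuples satisfying~\eqref{eqn:u_tuples}, and Lemma~\ref{lem:independence} give $\Pr[C_r(A_n)>t]\le\binom{n-1}{r-1}\Pr[R_n>t]$ for all $t$; fixing $\delta\in(0,\e)$ and applying Lemma~\ref{lem:large_deviation_max} bounds this by $\binom{n}{r-1}(\log n)\exp(-t^2/(2n(1+\delta)^2))$ for $t>2(1+\delta)\sqrt n$ and $n\ge n_0(\delta)$. The point is that $(1+\e)\lambda$ lies strictly above the level $(1+\delta)\lambda$ at which $\binom{n}{r-1}\exp(-t^2/(2n(1+\delta)^2))=1$: substituting $t=(1+\delta)s$ and using $\int_{s_0}^\infty e^{-s^2/(2n)}\,ds\le(n/s_0)e^{-s_0^2/(2n)}$ with $s_0=\rho\lambda$ and $\rho=(1+\e)/(1+\delta)>1$, the tail integral is at most a constant multiple of $(\log n)(n/\lambda)\binom{n}{r-1}^{1-\rho^2}$. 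Since $\rho>1$ and $\binom{n}{r-1}\ge(n/(r-1))^{r-1}$ by~\eqref{eqn:binomial_bound}, this is $O\big((\log n)\,n^{1/2-(r-1)(\rho^2-1)}\big)=o(\lambda)$; the factor $\log n$ from Lemma~\ref{lem:large_deviation_max} is harmless precisely because the polynomial saving $\binom{n}{r-1}^{-(\rho^2-1)}$ dominates it. Thus $\limsup_n\E[C_r(A_n)]/\lambda\le1+\e$, and letting $\e\to0$ gives the upper bound.

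The lower bound is the substantial part; it suffices to show $\Pr[C_r(A_n)\ge(1-\e)\lambda]\to1$ for each fixed $\e>0$, since then $\E[C_r(A_n)]\ge(1-\e)\lambda(1-o(1))$ and $\e\to0$ finishes the proof. I would restrict to the tuples in $\mathcal U=\{(u_2,\dots,u_r):0<u_2<\cdots<u_r\le\ell\}$ with $\ell=\lceil n^{1-\e/2}\rceil$, so that $\abs{\mathcal U}=\binom{\ell}{r-1}$ and each full sum $T_{\mathbf u}=\sum_{j=1}^{n-u_r}a_ja_{j+u_2}\cdots a_{j+u_r}$ is, by Lemma~\ref{lem:independence}, a sum of $n-u_r\ge n-\ell=n(1-o(1))$ independent $\pm1$ variables, while $C_r(A_n)\ge\max_{\mathbf u\in\mathcal U}\abs{T_{\mathbf u}}$. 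Let $Z=\#\{\mathbf u\in\mathcal U:T_{\mathbf u}\ge(1-\e)\lambda\}$ and apply the second moment method through $\Pr[Z>0]\ge\E[Z]^2/\E[Z^2]$. For the first moment, the moderate-deviation regime $(1-\e)\lambda/\sqrt n=O(\sqrt{\log n})$ permits a lower bound $\Pr[T_{\mathbf u}\ge(1-\e)\lambda]\ge\binom{n}{r-1}^{-(1-\e)^2(1+o(1))}$ for sums of i.i.d.\ Rademacher variables; together with $\abs{\mathcal U}=\binom{\ell}{r-1}$ and~\eqref{eqn:binomial_bound} this yields $\E[Z]\ge\binom{n}{r-1}^{(1-\e/2)-(1-\e)^2+o(1)}\to\infty$, the exponent $\tfrac32\e-\e^2$ being positive for small $\e$.

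The decisive step, which I expect to be the main obstacle, is the bound $\E[Z^2]\le(1+o(1))\E[Z]^2$. Writing $\E[Z^2]=\E[Z]+\sum_{\mathbf u\ne\mathbf v}\Pr[T_{\mathbf u}\ge(1-\e)\lambda,\,T_{\mathbf v}\ge(1-\e)\lambda]$, one first notes that distinct tuples give orthogonal sums: the shifted index set $\{j,j+u_2,\dots,j+u_r\}$ has minimum $j$ and therefore determines both $j$ and $\mathbf u$, so $\E[T_{\mathbf u}T_{\mathbf v}]=0$ for $\mathbf u\ne\mathbf v$. Orthogonality does not by itself control the joint upper tail, however; what is needed is the asymptotic independence $\Pr[T_{\mathbf u}\ge(1-\e)\lambda,\,T_{\mathbf v}\ge(1-\e)\lambda]\le(1+o(1))\Pr[T_{\mathbf u}\ge(1-\e)\lambda]\Pr[T_{\mathbf v}\ge(1-\e)\lambda]$, uniformly over the $\binom{\ell}{r-1}^2$ pairs, which gives $\sum_{\mathbf u\ne\mathbf v}\Pr[\,\cdots\,]\le(1+o(1))\E[Z]^2$ and hence, since $\E[Z]\to\infty$, the desired estimate. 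Establishing it amounts to a two-dimensional moderate-deviation estimate for the pair $(T_{\mathbf u},T_{\mathbf v})$ of orthogonal homogeneous Rademacher sums; I would attack this through a uniform bound on the joint exponential moment $\E[\exp(sT_{\mathbf u}+tT_{\mathbf v})]$ and an exponential-tilting (change-of-measure) argument, the delicate issues being the uniformity over all pairs and the control of tuples whose index patterns overlap heavily. Granting this estimate, Paley--Zygmund gives $\Pr[C_r(A_n)\ge(1-\e)\lambda]\ge\Pr[Z>0]\to1$, which completes the lower bound and, with the upper bound above, the proof.
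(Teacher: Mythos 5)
Your upper bound is sound (you integrate the tail of Lemma~\ref{lem:large_deviation_max} directly, where the paper instead combines Theorem~\ref{thm:upper_bound} with the concentration inequality of Lemma~\ref{lem:concentration}; both work). The lower bound, however, has a genuine gap at exactly the step you flag as decisive: the joint tail estimate $\Pr[T_{\mathbf u}\ge(1-\e)\lambda,\,T_{\mathbf v}\ge(1-\e)\lambda]\le(1+o(1))\Pr[T_{\mathbf u}\ge(1-\e)\lambda]\Pr[T_{\mathbf v}\ge(1-\e)\lambda]$, uniformly over all pairs of tuples. You write ``granting this estimate,'' but this is not a routine verification --- it is the technical heart of the entire proposition. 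The paper spends Lemmas~\ref{lem:even_tuple_single}--\ref{lem:moments} on it: the bound on $\E[(S_{u_2,\dots,u_r}S_{v_2,\dots,v_r})^{2p}]$ with $p\approx\log\binom{n}{r-1}$ requires a careful count of ``even'' index tuples, and the hard cases are precisely the heavily overlapping index patterns you mention (handled via the observation that a pair $\{x_i+u_k,y_j+v_\ell\}$ forces $x_i$ to determine at least two other free variables, because $(u_2,\dots,u_r)\ne(v_2,\dots,v_r)$ rules out a complete matching between the two index progressions). As it stands your argument assumes the hardest part. Moreover, the version you need is \emph{stronger} than what the paper actually proves: Lemma~\ref{lem:Pr_CuCv} gives only $23/\binom{n}{r-1}^2$, a constant-factor (not $1+o(1)$) multiple of the product of the individual tail probabilities, and that constant-factor bound would not by itself give $\E[Z^2]\le(1+o(1))\E[Z]^2$ and hence not $\Pr[Z>0]\to1$ via Paley--Zygmund.

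The structural point you are missing is that one does not need $\Pr[C_r(A_n)\ge(1-\e)\lambda]\to1$ at all. The paper works at the full threshold $\lambda=\sqrt{2n\log\binom{n}{r-1}}$, uses Bonferroni with the restricted set $W$ of tuples with $u_r\le n/\log n$ (so that $|W|$ is smaller than $\binom{n}{r-1}$ by a factor $(\log n)^{r-1}$, making the pair term negligible against the singles term even with only a constant-factor pair bound), and obtains merely $\Pr[C_r(A_n)\ge\lambda]\ge e^{-3r}r^{-1/2}(\log n)^{-(r-1/2)}$ --- a bound tending to zero. This weak bound is then boosted by the sub-Gaussian concentration of Lemma~\ref{lem:concentration}: if $\E[C_r(A_n)]$ were below $(1-\delta)\lambda$, the upper tail at $\lambda$ would be at most $\exp(-c\,\delta^2\log\binom{n}{r-1})$, which is eventually smaller than any inverse power of $\log n$. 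This trick is what lets the paper get away with a second-moment input far weaker than asymptotic independence. If you want to salvage your route, either prove the moment bound of Lemma~\ref{lem:moments} (your exponential-tilting plan still has to confront the same combinatorics of overlapping progressions), or switch to the paper's architecture, where a constant-factor pair bound plus McDiarmid suffices.
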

\par
To prove this proposition, we make repeated use of the following lemma, which follows from well known results on concentration of probability measures (see McDiarmid~\cite{McDiarmid1989}, for example).
\begin{lemma}[{\cite[Inequality~(99)]{Alon2007}}]
\label{lem:concentration}
Let $A_n$ be drawn uniformly at random from $\{-1,1\}^n$. Then, for $\theta\ge 0$,
\[
\Pr\Big[\bigabs{C_r(A_n)-\E[C_r(A_n)]}\ge\theta\Big]\le 2\exp\bigg(-\frac{\theta^2}{2r^2n}\bigg).
\]
\end{lemma}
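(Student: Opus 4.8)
The plan is to regard $C_r(A_n)$ as a function $f(a_1,\dots,a_n)$ of the $n$ mutually independent random signs and to apply McDiarmid's bounded differences inequality, exactly as the remark preceding the lemma suggests. The whole proof reduces to identifying the correct bounded differences constant: I would show that altering a single coordinate $a_i$ changes the value of $C_r(A_n)$ by at most $2r$. Granting this, McDiarmid's inequality with $c_i=2r$ for every $i$ gives
\[
\Pr\Big[\bigabs{C_r(A_n)-\E[C_r(A_n)]}\ge\theta\Big]\le 2\exp\bigg(-\frac{2\theta^2}{\sum_{i=1}^n(2r)^2}\bigg)=2\exp\bigg(-\frac{\theta^2}{2r^2n}\bigg),
\]
which is precisely the claimed bound.

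The key step is therefore the bounded differences estimate, and this is where I would put the work. Fix a tuple $0\le u_1<u_2<\cdots<u_r<n$ and an index $m$ with $1\le m\le n-u_r$, and consider the single inner sum $S=\sum_{j=1}^{m}a_{j+u_1}a_{j+u_2}\cdots a_{j+u_r}$ appearing in the definition of $C_r(A_n)$. I claim that flipping one coordinate $a_i$ changes $S$ by at most $2r$. Indeed, the variable $a_i$ occurs in the product $P_j=a_{j+u_1}\cdots a_{j+u_r}$ precisely when $i=j+u_k$ for some $k\in\{1,\dots,r\}$; for each such $k$ this forces the single value $j=i-u_k$, so $a_i$ occurs in $P_j$ for at most $r$ values of $j$, and, since the $u_k$ are distinct, at most once in each such product. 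Each $P_j$ has absolute value $1$, so flipping $a_i$ merely reverses the sign of at most $r$ of the summands; hence $S$ changes by at most $2r$.

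It remains to observe that taking an absolute value and then a maximum both preserve this Lipschitz constant. Writing $C_r(A_n)=\max_\alpha\abs{S_\alpha}$, where $\alpha$ ranges over all admissible tuples $(u_1,\dots,u_r)$ together with the range parameter $m$, the reverse triangle inequality gives $\bigabs{\,\abs{S_\alpha(a)}-\abs{S_\alpha(a')}\,}\le\abs{S_\alpha(a)-S_\alpha(a')}\le 2r$ whenever $a$ and $a'$ differ in a single coordinate, and the elementary fact that a pointwise maximum of functions each having bounded differences $c$ itself has bounded differences $c$ then yields $\bigabs{C_r(a)-C_r(a')}\le 2r$. I do not anticipate any genuine obstacle here: all the content lies in the counting observation that each coordinate influences at most $r$ of the product terms, after which McDiarmid's inequality is invoked as a black box.
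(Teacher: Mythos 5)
Your proof is correct and follows exactly the route the paper intends: the paper offers no proof of this lemma, instead citing \cite[Inequality~(99)]{Alon2007} and remarking that it follows from McDiarmid's bounded differences method \cite{McDiarmid1989}, and your argument supplies precisely those standard details. Your Lipschitz constant is right (each $a_i$ occurs in at most $r$ of the $\pm 1$ products, since the $u_k$ are distinct, so each single sum and hence the maximum $C_r$ changes by at most $2r$), and McDiarmid's inequality with $c_i=2r$ indeed yields the exponent $2\theta^2/(4r^2n)=\theta^2/(2r^2n)$ as claimed.
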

\par 
By combining Lemma~\ref{lem:concentration} and Theorem~\ref{thm:upper_bound}, it is readily verified that
\begin{equation}
\limsup_{n\to\infty}\frac{\E\big[C_r(A_n)\big]}{\sqrt{2n\log \binom{n}{r-1}}}\le 1.   \label{eqn:E_upper_bound}
\end{equation}
In studying a problem that is related to the second order correlation measure of finite binary sequences, the author proved in~\cite{Schmidt2014} that
\[
\liminf_{n\to\infty}\frac{\E\big[C_2(A_n)\big]}{\sqrt{2n\log n}}\ge 1,
\]
which proves Proposition~\ref{pro:ECr} for $r=2$. Our proof of the general case is also based on the approach of~\cite{Schmidt2014}. 
\par
Let $A_n=(a_1,a_2\dots,a_n)$ be an element of $\{-1,1\}^n$ and, for integers $u_2,\dots,u_r$ satisfying
\[
0<u_2<u_3<\cdots<u_r<n,
\]
define 
\[
S_{u_2,\dots,u_r}(A_n)=\sum_{j=1}^{n-u_r}a_ja_{j+u_2}\cdots a_{j+u_r}.
\]
The key ingredients to the proof of Proposition~\ref{pro:ECr} are the following two lemmas on $S_{u_2,\dots,u_r}(A_n)$, which generalise~\cite[Proposition~2.1]{Schmidt2014} and~\cite[Proposition~2.7]{Schmidt2014}, respectively, from $r=2$ to general $r\ge 2$. These lemmas can be proved by modifying the arguments used in~\cite{Schmidt2014}. As the modifications are not always obvious, we include proofs at the end of this section.
\begin{lemma}
\label{lem:Pr_Cu}
Let $A_n$ be drawn uniformly at random from $\{-1,1\}^n$ and let $r\ge 2$ be an integer. Then there exists a constant $n_0=n_0(r)$, such that for all $n\ge n_0$ and all
\[
0<u_2<u_3<\cdots<u_r\le \frac{n}{\log n},
\]
we have
\begin{equation}
\Pr\Big[\abs{S_{u_2,\dots,u_r}(A_n)}\ge \sqrt{2n\log \tbinom{n}{r-1}}\Big]\ge\frac{1}{5e^{r-2}\tbinom{n}{r-1}\sqrt{\log \tbinom{n}{r-1}}}.   \label{eqn:prob_Cu}
\end{equation}
\end{lemma}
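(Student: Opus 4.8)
The plan is to reduce the statement to a tail bound for a symmetric simple random walk and then to estimate that tail by Stirling's formula, keeping careful track of every constant. Throughout, write $N=n-u_r$, $B=\binom{n}{r-1}$, and $L=\log B$, so that the threshold in question is $\lambda=\sqrt{2nL}$. By Lemma~\ref{lem:independence} the $N$ products making up $S_{u_2,\dots,u_r}(A_n)$ are mutually independent and each takes the values $-1$ and $1$ with probability $1/2$; hence $S_{u_2,\dots,u_r}(A_n)$ has the same distribution as $\sum_{j=1}^N X_j$, and this distribution depends on the tuple $(u_2,\dots,u_r)$ only through $N$. The hypothesis $u_r\le n/\log n$ enters here and only here: it forces $N\ge n(1-1/\log n)$, which is exactly what keeps $N$ close to $n$ and hence controls the exponent below.

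By symmetry the target probability equals $2\Pr\big[\sum_{j=1}^N X_j\ge\lambda\big]$, and since $\sum_j X_j$ and $N$ share the same parity this is $2\sum_{k\ge\lambda,\;k\equiv N\,(2)}\binom{N}{(N+k)/2}2^{-N}$. I would bound the individual point probabilities $\binom{N}{(N+k)/2}2^{-N}$ from below by Stirling's formula with explicit error terms, which is legitimate in the relevant range because $\lambda=O(\sqrt{n\log n})=o(N^{2/3})$, so the quartic correction in the expansion of $\log\binom{N}{(N+k)/2}$ is negligible. Summing these Gaussian point masses over $k\ge\lambda$ (the terms decay geometrically, so that of order $N/\lambda$ of them contribute) yields the standard Gaussian-tail lower bound
\[
\Pr\Big[\sum_{j=1}^N X_j\ge\lambda\Big]\ge\frac{1-o(1)}{\sqrt{2\pi}}\,\frac{\sqrt N}{\lambda}\,\exp\Big(-\frac{\lambda^2}{2N}\Big).
\]

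It then remains to insert $\lambda^2=2nL$ and bound each factor. From $N\ge n(1-1/\log n)$ one gets $\sqrt N/\lambda\ge(1-o(1))/\sqrt{2L}$ and, more delicately,
\[
\frac{\lambda^2}{2N}=\frac{nL}{N}\le L+\frac{L}{\log n-1}.
\]
Combined with $L\le(r-1)\log\big(en/(r-1)\big)$ from~\eqref{eqn:binomial_bound}, the second term satisfies $L/(\log n-1)\le(r-1)+o(1)$, so that $\exp(-\lambda^2/(2N))\ge e^{-(r-1)}B^{-1}(1-o(1))$. Collecting the factors and using the two-sided symmetry, the target probability is at least
\[
\frac{1-o(1)}{\sqrt{\pi L}}\,\frac{e^{-(r-1)}}{B},
\]
and since $1/\sqrt\pi>e/5$ this exceeds $\big(5e^{r-2}B\sqrt L\big)^{-1}$ once $n$ is large enough, giving the claim with a suitable $n_0=n_0(r)$.

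The main obstacle is the tightness of the constant. The asymptotic coefficient $1/\sqrt\pi\approx0.564$ only narrowly beats the required $e/5\approx0.544$, so none of the error terms may be treated loosely: both the Stirling/local-limit error and the gap between $n$ and $N$ (which is precisely what the hypothesis $u_r\le n/\log n$ governs) must be shown to contribute only a factor $1-o(1)$ and then absorbed by choosing $n_0(r)$ large. This is exactly why the statement is asymptotic in $n$, and why the bound carries the factor $e^{r-2}$ rather than the more naturally occurring $e^{r-1}$ from the exponent: the extra factor of $e$ supplies the breathing room that makes the inequality hold.
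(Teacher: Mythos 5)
Your proposal is correct and follows essentially the same route as the paper: reduce to a sum of $n-u_r$ independent signs via Lemma~\ref{lem:independence}, apply the Gaussian tail asymptotic $\Pr[\abs{S}\ge \xi\sqrt{N}]\sim\sqrt{2/\pi}\,\xi^{-1}e^{-\xi^2/2}$ (which the paper simply cites from Feller rather than re-deriving by Stirling), use $u_r\le n/\log n$ to show the exponent costs only an extra factor $e^{-(r-1)}(1-o(1))$, and close with the strict numerical inequality $e\sqrt{\pi}<5$. The accounting of constants, including the observation that the stated $e^{r-2}$ (rather than $e^{r-1}$) is what absorbs the $1-o(1)$ errors, matches the paper exactly.
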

\par
\begin{lemma}
\label{lem:Pr_CuCv}
Let $A_n$ be drawn uniformly at random from $\{-1,1\}^n$, let $r\ge 2$ be an integer, and write
\[
\lambda=\sqrt{2n\log \tbinom{n}{r-1}}.
\]
Let $u_2<u_3<\cdots<u_r$ and $v_2<v_3<\cdots<v_r$ be positive integers strictly less than $n$ satisfying $(u_2,\dots,u_r)\ne(v_2,\dots,v_r)$. Then there exists a constant $n_0=n_0(r)$, such that for all $n\ge n_0$, we have
\begin{equation}
\Pr\big[\abs{S_{u_2,\dots,u_r}(A_n)}\ge \lambda\,\cap\,\abs{S_{v_2,\dots,v_r}(A_n)}\ge \lambda\big]\le \frac{23}{\tbinom{n}{r-1}^2}.   \label{eqn:prob_upper_bound}
\end{equation}
\end{lemma}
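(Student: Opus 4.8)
The plan is to bound the joint tail through the exponential moment (Chernoff) method, after first reducing it to tails of the sum and difference of the two walks. Writing $S_u=S_{u_2,\dots,u_r}(A_n)$ and $S_v=S_{v_2,\dots,v_r}(A_n)$, I observe that if $\abs{S_u}\ge\lambda$ and $\abs{S_v}\ge\lambda$, then, according to whether $S_u$ and $S_v$ have equal or opposite signs, either $\abs{S_u+S_v}\ge 2\lambda$ or $\abs{S_u-S_v}\ge 2\lambda$. Hence the probability in~\eqref{eqn:prob_upper_bound} is at most $\Pr[\abs{S_u+S_v}\ge 2\lambda]+\Pr[\abs{S_u-S_v}\ge2\lambda]$, and it suffices to bound each of the four one-sided tails, via the Chernoff bound $\Pr[W\ge 2\lambda]\le e^{-2t\lambda}\,\E[e^{tW}]$ applied to $W\in\{\pm(S_u+S_v),\pm(S_u-S_v)\}$, by roughly $\binom{n}{r-1}^{-2}$.

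The core is to estimate the exponential moment $\E[e^{\pm t(S_u\pm S_v)}]$. I would write the two sums as sums of monomials $P_j=a_ja_{j+u_2}\cdots a_{j+u_r}$ and $Q_k=a_ka_{k+v_2}\cdots a_{k+v_r}$, each taking values in $\{-1,1\}$, so that $e^{tP_j}=\cosh t+P_j\sinh t$ and likewise for $Q_k$. Expanding the product $\prod_j(\cosh t+P_j\sinh t)\prod_k(\cosh t\pm Q_k\sinh t)$ and taking expectations, the term indexed by a subset $J$ of the $P$'s and a subset $K$ of the $Q$'s survives only if the combined monomial $\prod_{j\in J}P_j\prod_{k\in K}Q_k$ is identically $1$. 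By Lemma~\ref{lem:independence} the $P_j$ are mutually independent and the $Q_k$ are mutually independent, so every surviving term has both $J$ and $K$ nonempty, except for the empty term, which contributes $(\cosh t)^{N_u+N_v}$ with $N_u=n-u_r$ and $N_v=n-v_r$. Using $N_u+N_v\le 2n$ and $\cosh t\le e^{t^2/2}$, this main term is at most $e^{t^2n}$, and the choice $t=\lambda/n$ gives exactly $e^{-2t\lambda}e^{t^2n}=\binom{n}{r-1}^{-2}$. Thus the four tails would sum to at most $4\binom{n}{r-1}^{-2}$ were it not for the surviving cross terms, and everything reduces to showing that these contribute only a negligible amount.

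After dividing by the main term, the total contribution of the cross terms is at most $\sum(\tanh t)^{\abs J+\abs K}\le\sum t^{\abs J+\abs K}$, the sum ranging over all nonempty pairs $(J,K)$ with $\prod_{j\in J}P_j\prod_{k\in K}Q_k\equiv 1$, which I call \emph{mixed trivial combinations}. I would bound this combinatorially. Such a product is identically $1$ precisely when every variable $a_i$ occurs an even number of times; since each monomial is squarefree and determined by its base index, two monomials share a variable only when their base indices differ by one of the $O(r^2)$ fixed quantities $u_a-u_b$, $u_a-v_b$, $v_a-v_b$. The crucial point is that no mixed combination of size $\abs J+\abs K\le 2$ can be trivial: a single identity $P_j=Q_k$ would force the support sets to agree and hence $(u_2,\dots,u_r)=(v_2,\dots,v_r)$, contrary to hypothesis. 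Consequently every connected trivial combination uses at least three monomials. Fixing a translation (the base index of one monomial, giving $O(n)$ choices) and attaching the remaining monomials along shared variables ($O(r^2)$ choices each) shows that the number of connected trivial combinations of size $\ell$ is $O(n\,r^{2\ell})$, uniformly in the shifts, and that a combination splitting into $c$ components contributes a factor $O(n^c)$. Summing the resulting absolutely convergent series (convergent since $t\to 0$) gives a single-component contribution of order $n\sum_{\ell\ge 3}(r^2t)^\ell=O(nt^3)=O\big((\log n)^{3/2}n^{-1/2}\big)$, with the multi-component terms exponentiating this and remaining $o(1)$. Hence for all $n\ge n_0(r)$ the cross terms inflate the main bound by at most a constant factor, turning $4\binom{n}{r-1}^{-2}$ into at most $23\binom{n}{r-1}^{-2}$, the precise constant being routine accounting.

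I expect the combinatorial counting of mixed trivial combinations to be the main obstacle: one must argue, uniformly over all admissible pairs of shift tuples and for every size $\ell$, both that the minimal connected trivial combination has size at least three and that the number of such combinations grows no faster than $O(n\,r^{2\ell})$. This is exactly where the passage from $r=2$ (where the trivial combinations are the parallelogram $4$-cycles of~\cite[Proposition~2.7]{Schmidt2014}) to general $r$ is least transparent, since size-$r$ monomials can cancel in more varied patterns; keeping the single translation parameter as the only source of $n$-dependence while controlling the number of shapes is the delicate ingredient.
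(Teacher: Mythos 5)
Your argument is correct, but it takes a genuinely different route from the paper. The paper bounds the joint tail by Markov's inequality applied to the mixed moment $\E[(S_{u_2,\dots,u_r}(A_n)S_{v_2,\dots,v_r}(A_n))^{2p}]$ with $p=\lfloor\log\binom{n}{r-1}\rfloor$ (Lemma~\ref{lem:moments}); the moment is identified with the number of even tuples of the form~\eqref{eqn:element_of_T}, which is partitioned into a main class $T_1$ counted by $[(2p-1)!!]^2n^{2p}$ and two correction classes $T_2,T_3$ controlled by Lemma~\ref{lem:even_tuple_double}. You instead pass to $\abs{S_u\pm S_v}\ge2\lambda$ and run a Chernoff bound on the moment generating function, controlling the deviation of $\E[e^{tW}]$ from $(\cosh t)^{N_u+N_v}$ by a cluster-type expansion over ``trivial'' mixed products. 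The two proofs hinge on the same combinatorial fact in different clothing: your observation that $P_j=Q_k$ is impossible when $(u_2,\dots,u_r)\ne(v_2,\dots,v_r)$ is exactly the paper's observation that the pairing $\{x_i+u_1,y_j+v_1\},\dots,\{x_i+u_r,y_j+v_r\}$ is never consistent. What each buys: the paper's moment method stays within the finite, explicit machinery of~\cite{Schmidt2014} and yields the constant $3e^2+o(1)$ per application; your mgf route gives a cleaner main term (exactly $\binom{n}{r-1}^{-2}$ per tail, so constant $4+o(1)$, better than $23$). The one step you should firm up is the count of connected trivial combinations of size $\ell$: since $\ell$ can be comparable to $n$, enumerating attachment orders or spanning trees introduces a factor like $\ell^{\ell-2}$ that is not absorbed by your claimed $O(n\,r^{2\ell})$. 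The clean fix is to view the monomials as vertices of a graph of maximum degree $D=O(r^2)$ (two monomials adjacent iff they share a variable) and invoke the standard bound of at most $(eD)^{\ell-1}$ connected $\ell$-subsets through a fixed vertex; together with $t=\lambda/n\to0$ this makes your series converge with leading term $O(nt^3r^4)=O(r^4(r\log n)^{3/2}n^{-1/2})=o(1)$, and the exponential of this controls the multi-component terms. With that lemma in hand your proof is complete and, if anything, slightly sharper than the one in the paper.
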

\par
We now prove Proposition~\ref{pro:ECr}.
\par
\begin{proof}[Proof of Proposition~\ref{pro:ECr}]
Let $\delta>0$ and define the set
\begin{equation}
N(\delta)=\bigg\{n\ge r:\frac{\E\big[C_r(A_n)\big]}{\sqrt{2n\log \binom{n}{r-1}}}<1-\delta\bigg\}.   \label{eqn:def_N_delta}
\end{equation}
We shall show that $N(\delta)$ has finite size for all choices of $\delta>0$, which together with~\eqref{eqn:E_upper_bound} proves the proposition. To do so, we define the set
\[
W=\bigg\{(u_2,u_3,\dots,u_r)\in\Z^{r-1}:0<u_2<u_3<\cdots<u_r\le \frac{n}{\log n}\bigg\}.
\]
Since
\[
C_r(A_n)\ge\max_{(u_2,\dots,u_r)\in W}\,\bigabs{S_{u_2,\dots,u_r}(A_n)},
\]
we find by the inclusion-exclusion principle that, for all real $\lambda$,
\begin{multline*}
\Pr\big[C_r(A_n)\ge \lambda\big]\ge\sum_{(u_2,\dots,u_r)\in W}\Pr\big[\abs{S_{u_2,\dots,u_r}(A_n)}\ge \lambda\big]\\[1ex]
\quad-\frac{1}{2}\sums{(u_2,\dots,u_r),(v_2,\dots,v_r)\in W\\(u_2,\dots,u_r)\ne(v_2,\dots,v_r)}\Pr\big[\abs{S_{u_2,\dots,u_r}(A_n)}\ge \lambda\cap\abs{S_{v_2,\dots,v_r}(A_n)}\ge \lambda\big].
\end{multline*}
Now take
\begin{equation}
\lambda=\sqrt{2n\log \tbinom{n}{r-1}}   \label{eqn:lambda}
\end{equation}
and apply Lemmas~\ref{lem:Pr_Cu} and~\ref{lem:Pr_CuCv} to get, for all sufficiently large $n$,
\begin{equation}
\Pr\big[C_r(A_n)\ge \lambda\big]\ge \abs{W}\cdot\frac{1}{5e^{r-2}\tbinom{n}{r-1}\sqrt{\log \tbinom{n}{r-1}}}-\frac{\abs{W}^2}{2}\cdot\frac{23}{\tbinom{n}{r-1}^2}.   \label{eqn:Pr_Cr_W}
\end{equation}
We have
\[
\abs{W}=\binom{\lfloor n/\log n\rfloor}{r-1}
\]
and by the elementary bounds~\eqref{eqn:binomial_bound} for binomial coefficients we find that, for all sufficiently large $n$,
\begin{align*}
\abs{W}&\le \bigg(\frac{en}{(r-1)\log n}\bigg)^{r-1}\le\bigg(\frac{e}{\log n}\bigg)^{r-1}\binom{n}{r-1}
\intertext{and}
\abs{W}&\ge \bigg(\frac{n}{2(r-1)\log n}\bigg)^{r-1}\ge \bigg(\frac{1}{2e\log n}\bigg)^{r-1}\binom{n}{r-1}.
\end{align*}
Hence, from~\eqref{eqn:Pr_Cr_W} we obtain, for all sufficiently large $n$,
\[
\Pr\big[C_r(A_n)\ge \lambda\big]\ge \frac{1}{5e^{r-2}}\bigg(\frac{1}{2e\log n}\bigg)^{r-1}\frac{1}{\sqrt{r\log n}}-12\bigg(\frac{e}{\log n}\bigg)^{2r-2}.
\]
Since $r\ge 2$, the first term on the right hand side dominates, and so a crude estimate gives
\begin{equation}
\Pr\big[C_r(A_n)\ge \lambda\big]\ge \frac{1}{e^{3r}\sqrt{r}}\,\bigg(\frac{1}{\log n}\bigg)^{r-1/2}   \label{eqn:Pr_C_lb}
\end{equation}
for all sufficiently large $n$. By the definition~\eqref{eqn:def_N_delta} of $N(\delta)$, we have $\lambda>\E[C_r(A_n)]$ for all $n\in N(\delta)$, and thus find from Lemma~\ref{lem:concentration} with $\theta=\lambda-\E[C_r(A_n)]$ that, for all $n\in N(\delta)$,
\[
\Pr\big[C_r(A_n)\ge \lambda\big]\le 2\exp\bigg(-\frac{(\lambda-\E[C_r(A_n)])^2}{2r^2n}\bigg).
\]
Comparison with~\eqref{eqn:Pr_C_lb} then gives, for all sufficiently large $n\in N(\delta)$,
\[
\frac{1}{e^{3r}\sqrt{r}}\,\bigg(\frac{1}{\log n}\bigg)^{r-1/2}\le 2\exp\bigg(-\frac{(\lambda-\E[C_r(A_n)])^2}{2r^2n}\bigg),
\]
or equivalently, after substituting the value~\eqref{eqn:lambda} for $\lambda$,
\[
\frac{\E[C_r(A_n)]}{\sqrt{2n\log \binom{n}{r-1}}}\ge 1-\sqrt{\frac{r^2(r-1/2)\log\log n+r^2\log (2e^{3r}\sqrt{r})}{\log\binom{n}{r-1}}}.
\]
Hence, by the definition~\eqref{eqn:def_N_delta} of $N(\delta)$, we see that $N(\delta)$ has finite size for all choices of $\delta>0$, as required.
\end{proof}
\par
In the remainder of this section, we provide proofs of Lemmas~\ref{lem:Pr_Cu} and~\ref{lem:Pr_CuCv}.
\begin{proof}[Proof of Lemma~\ref{lem:Pr_Cu}]
We adopt the standard notation $x_n\sim y_n$ to mean that $x_n=y_n(1+o(1))$ as $n\to\infty$. By Lemma~\ref{lem:independence}, $S_{u_2,\dots,u_r}(A_n)$ is a sum of $n-u_r$ mutually independent random variables, each taking each of the values $-1$ and $+1$ with probability $1/2$. We use a normal approximation to estimate the tail of the distribution of $\abs{S_{u_2,\dots,u_r}(A_n)}$ (see Feller~\cite[Chapter VII, (6.7)]{Feller1968}, for example): If $\xi_n\to\infty$ in such a way that $\xi_n^3/\sqrt{n}\to 0$ as $n\to\infty$, then 
\[
\Pr\Big[\bigabs{S_{u_2,\dots,u_r}(A_n)}\ge\xi_n\sqrt{n-u_r}\Big]\sim \sqrt{\frac{2}{\pi}}\cdot\frac{1}{\xi_n}\,\exp\bigg(-\frac{\xi_n^2}{2}\bigg).
\]
Taking
\[
\xi_n=\sqrt{\frac{2n}{n-u_r}\,\log\binom{n}{r-1}}
\]
gives, since $\frac{n}{n-u_r}\sim 1$,
\begin{multline}
\Pr\Big[\bigabs{S_{u_2,\dots,u_r}(A_n)}\ge\sqrt{2n\log \tbinom{n}{r-1}}\Big]\\
\sim \frac{1}{\sqrt{\pi\log \binom{n}{r-1}}}\,\exp\bigg(-\frac{n}{n-u_r}\log \binom{n}{r-1}\bigg).   \label{eqn:Pr_Cu_F}
\end{multline}
Using $u_r\le \frac{n}{\log n}$, we have
\[
\exp\bigg(-\frac{n}{n-u_r}\log \binom{n}{r-1}\bigg)\ge \exp\bigg(-\frac{\log n}{\log n-1}\,\log \binom{n}{r-1}\bigg),
\]
and then, since
\[
\exp\bigg(-\frac{\log n}{\log n-1}\,\log \binom{n}{r-1}\bigg)\sim\frac{1}{e^{r-1}\binom{n}{r-1}}
\]
and $e\sqrt{\pi}<5$, we find from~\eqref{eqn:Pr_Cu_F} that
\[
\Pr\Big[\bigabs{S_{u_2,\dots,u_r}(A_n)}\ge\sqrt{2n\log \tbinom{n}{r-1}}\Big]\ge\frac{1}{5e^{r-2}\binom{n}{r-1}\sqrt{\log \binom{n}{r-1}}}
\]
for all sufficiently large~$n$.
\end{proof}
\par
To prove Lemma~\ref{lem:Pr_CuCv}, it is convenient to use the following notation.
\begin{definition}
A tuple $(x_1,\dots,x_{2m})$ is \emph{$d$-even} if there exists a permutation $\sigma$ of $\{1,2,\dots,2m\}$ such that $x_{\sigma(2i-1)}=x_{\sigma(2i)}$ for each $i\in\{1,2,\dots,d\}$ and $d$ is the largest integer with this property. An $m$-even tuple $(x_1,\dots,x_{2m})$ is just called \emph{even}.
\end{definition}
\par
For example, $(1,3,1,4,3,4)$ is even, while $(2,1,1,2,1,3)$ is $2$-even. In the next two lemmas we state two results about even tuples.
\par
Recall that, for a positive integer $k$, the double factorial
\[
(2k-1)!!=\frac{(2k)!}{k!\,2^k}=(2k-1)(2k-3)\cdots 3\cdot 1
\]
is the number of ways to arrange $2k$ objects into $k$ unordered pairs. The following lemma is immediate.
\begin{lemma}[{\cite[Lemma~2.4]{Schmidt2014}}]
\label{lem:even_tuple_single}
Let $m$ and $q$ be positive integers. Then the number of even tuples in $\{1,\dots,m\}^{2q}$ is at most $(2q-1)!!\,m^q$.
\end{lemma}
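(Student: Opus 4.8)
The plan is to read the definition of an even tuple through the language of perfect matchings of the coordinate positions. A tuple $(x_1,\dots,x_{2q})\in\{1,\dots,m\}^{2q}$ is even exactly when there is a permutation $\sigma$ of $\{1,\dots,2q\}$ with $x_{\sigma(2i-1)}=x_{\sigma(2i)}$ for all $i$; the unordered pairs $\{\sigma(2i-1),\sigma(2i)\}$ then form a partition of $\{1,\dots,2q\}$ into $q$ two-element blocks, and the defining condition says precisely that matched positions carry equal entries. Thus a tuple is even if and only if its index set admits a perfect matching whose blocks are monochromatic for the tuple.

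First I would form the set $D$ of all pairs $(M,f)$ in which $M$ is a perfect matching of $\{1,\dots,2q\}$ into $q$ unordered pairs and $f$ assigns to each block of $M$ a value in $\{1,\dots,m\}$. By the interpretation of the double factorial recalled just above, there are $(2q-1)!!$ choices for $M$, and $m^q$ choices for $f$ once $M$ is fixed, so $\abs{D}=(2q-1)!!\,m^q$. I would then define the map $D\to\{1,\dots,m\}^{2q}$ sending $(M,f)$ to the tuple whose entry in each position is the $f$-value of the block of $M$ containing that position.

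Every tuple in the image of this map is even by construction, and conversely every even tuple lies in the image: a witnessing permutation supplies a matching $M$, and recording the common entry on each block supplies a compatible labelling $f$. Hence the map is a surjection onto the set of even tuples, and the number of even tuples is at most $\abs{D}=(2q-1)!!\,m^q$, as claimed. There is essentially no obstacle here; the only point worth noting is that the map is in general not injective---when some value occurs four or more times, the positions carrying it can be matched in several ways---which is exactly why one obtains an upper bound rather than an exact count. This matches the paper's assertion that the lemma is immediate.
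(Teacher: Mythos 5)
Your proof is correct and is precisely the argument the paper intends: it recalls that $(2q-1)!!$ counts the ways to arrange $2q$ objects into $q$ unordered pairs and then declares the lemma immediate, which is exactly your surjection from pairs (perfect matching, block labelling) onto the set of even tuples. Your remark that the map need not be injective, so one only gets an upper bound, is a correct and worthwhile clarification.
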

\par
The following lemma generalises~\cite[Lemma~2.5]{Schmidt2014}.
\begin{lemma}
\label{lem:even_tuple_double}
Let $n$, $q$, and $t$ be positive integers satisfying $0\le t<q$ and let $u_2<u_3<\dots<u_r$ and $v_2<v_3<\dots<v_r$ be positive integers strictly less than $n$ satisfying $(u_2,\dots,u_r)\ne (v_2,\dots,v_r)$. Write $I=\{1,\dots,2q\}$ and let $S$ be the subset of $\{1,\dots,n\}^{4rq}$ containing all even elements
\[
(x_i,x_i+u_2,\dots,x_i+u_r,y_i,y_i+v_2,\dots,y_i+v_r)_{i\in I}
\]
such that $(x_i)_{i\in I}$ is $d$-even for some $d<q-t$. Then
\[
\abs{S}\le (4rq-1)!!\,n^{2q-(t+1)/3}.
\]
\end{lemma}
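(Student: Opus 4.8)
The plan is to organise the count around a matching that certifies evenness and then to bound the number of degrees of freedom of the resulting linear system. I would set $u_1=v_1=0$, so that the $i$-th x-block is $(x_i+u_1,\dots,x_i+u_r)$ and the $i$-th y-block is $(y_i+v_1,\dots,y_i+v_r)$; thus an element of $S$ is determined by the $4q$ base points $(x_i)_{i\in I}$ and $(y_i)_{i\in I}$ in $\{1,\dots,n\}$. Since every element of $S$ is even, its $4rq$ coordinates can be partitioned into $2rq$ pairs of equal values, and there are exactly $(4rq-1)!!$ such perfect matchings of the coordinate positions. Charging each element of $S$ to one matching $M$ with which it is consistent would give $\abs{S}\le\sum_M N(M)$, where $N(M)$ is the number of admissible base-point assignments whose matched coordinates agree. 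It would then suffice to prove $N(M)\le n^{2q-(t+1)/3}$ for every $M$.

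For fixed $M$ I would build a multigraph $G_M$ whose vertices are the $4q$ blocks, placing one edge between two blocks for each matched pair of their coordinates; there are no loops because the coordinates inside a single block are pairwise distinct. A matched pair reads $x_i+u_j=x_{i'}+u_{j'}$ (or a mixed, or a y-y, variant), and each such equation determines the base point of one block from that of a neighbour. Hence a consistent assignment is fixed by choosing one value per connected component, so $N(M)\le n^{c}$, where $c$ is the number of connected components of $G_M$.

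The decisive point is a rigidity statement for small components. A component with only two blocks forces those two blocks to be matched completely to each other (no coordinate can leave the component, and none can stay inside its own block). A complete matching between an x-block and a y-block would give $x_i+U=y_{i'}+V$ as sets, where $U=\{u_1,\dots,u_r\}$ and $V=\{v_1,\dots,v_r\}$; comparing minima (both $U$ and $V$ have minimum $0$) yields $U=V$, contradicting $(u_2,\dots,u_r)\ne(v_2,\dots,v_r)$. A complete matching between two x-blocks gives $x_i+U=x_{i'}+U$, and a finite set equals a translate of itself only under the zero translation, whence $x_i=x_{i'}$; likewise two y-blocks must be aligned. Consequently every two-block component is an aligned x-x or y-y pair of equal base points, while every remaining component has at least three blocks. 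Writing $a$ and $b$ for the numbers of aligned x-x and y-y pairs, the $a$ x-x pairs would display $a$ disjoint equal values among $(x_i)$, so any assignment counted by $N(M)$ satisfies $a\le d\le q-t-1$ (if instead $a\ge q-t$, then $N(M)=0$), while trivially $b\le q$. Counting the $4q$ blocks, the components of size at least $3$ number at most $(4q-2a-2b)/3$, whence $c\le a+b+(4q-2a-2b)/3=(a+b+4q)/3\le(6q-t-1)/3=2q-(t+1)/3$. Summing $N(M)\le n^{c}\le n^{2q-(t+1)/3}$ over the $(4rq-1)!!$ matchings would complete the proof.

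The main obstacle is the rigidity step: establishing that two blocks of opposite type can never be matched completely, and that a complete matching of two blocks of the same type must be aligned. Both parts hinge on $(u_2,\dots,u_r)\ne(v_2,\dots,v_r)$ together with the fact that a finite set cannot coincide with a nontrivial translate of itself; once this is secured, the fraction $1/3$ in the exponent is forced by the lower bound of $3$ on the size of every component that is not such an aligned pair.
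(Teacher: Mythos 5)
Your proof is correct and takes essentially the same route as the paper's: the paper likewise arranges the $4rq$ coordinates into pairs in at most $(4rq-1)!!$ ways, uses $u_1=v_1=0$ together with $(u_2,\dots,u_r)\ne(v_2,\dots,v_r)$ to rule out a complete matching between an $x$-block and a $y$-block (your rigidity step), and derives the exponent $2q-(t+1)/3$ from exactly the count $a+b+(4q-2a-2b)/3$ with $a\le q-t-1$ and $b\le q$. Your multigraph-of-blocks formulation merely makes explicit the paper's bookkeeping that each cross pair forces $x_i$ to determine at least two further base variables.
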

\begin{proof}
We will construct a set of tuples that contains $S$ as a subset. For convenience write $u_1=v_1=0$.  Arrange the $4rq$ variables
\begin{equation}
x_i+u_k,\,y_i+v_k\quad\text{for $i\in I$ and $k\in\{1,2,\dots,r\}$}   \label{eqn:xy}
\end{equation}
into $2rq$ unordered pairs $\{a_1,b_1\},\{a_2,b_2\},\dots,\{a_{2rq},b_{2rq}\}$ such that there are at most $q-t-1$ pairs $\{x_i,x_j\}$. This can be done in at most $(4rq-1)!!$ ways. We formally set $a_i=b_i$ for all $i\in\{1,2,\dots,2rq\}$. If this assignment does not yield a contradiction, then we call the arrangement of~\eqref{eqn:xy} into $2rq$ pairs \emph{consistent}. For example, if there are pairs of the form $\{x_i,y_j\}$, $\{x_i+u_2,y_j+v_2\}$, \dots, $\{x_i+u_r,y_j+v_r\}$, then the arrangement is not consistent since $(u_2,\dots,u_r)\ne (v_2,\dots,v_r)$ by assumption. 
\par
Notice that, if there is a pair of the form $\{x_i+u_k,x_j+u_\ell\}$ in a consistent arrangement, then $i\ne j$ and~$x_i$ determines~$x_j$. Likewise, if there is a pair of the form $\{y_i+v_k,y_j+v_\ell\}$ in a consistent arrangement, then $i\ne j$ and~$y_i$ determines~$y_j$. On the other hand, if a consistent arrangement contains a pair of the form $\{x_i+u_k,y_j+v_\ell\}$, then $x_i$ determines $y_j$ and at least one other variable in the list
\begin{equation}
x_1,\dots,x_{2q},y_1,\dots,y_{2q}.   \label{eqn:var_list}
\end{equation}
To see this, it is enough to show that a consistent arrangement cannot contain $r$ pairs involving only the $2r$ variables
\begin{equation}
x_i+u_1,\dots,x_i+u_r,y_j+v_1,\dots,y_j+v_r.   \label{eqn:vars}
\end{equation}
Indeed, since $u_1<\dots<u_r$ and $v_1<\dots<v_r$ and $u_1=v_1=0$, the only possibility for such $r$ pairs would be $\{x_i+u_1,y_j+v_1\}$, \dots, $\{x_i+u_r,y_j+v_r\}$. However, as already mentioned above, this implies that the arrangement is not consistent. Hence at least one of the variables in the list~\eqref{eqn:vars} must be paired with a variable not in the list~\eqref{eqn:vars}, and so $x_i$ determines another variable in the list~\eqref{eqn:var_list} different from~$y_j$.
\par
Now, by assumption, each consistent arrangement contains at most $q-t-1$ pairs of the form $\{x_i,x_j\}$ and at most $q$ pairs of the form $\{y_i,y_j\}$, and so at most 
\[
q-t-1+q+\tfrac{1}{3}(2t+2)=2q-\tfrac{1}{3}(t+1)
\]
of the variables in~\eqref{eqn:var_list} can be chosen independently. We assign to each of these a value of $\{1,\dots,n\}$. In this way, we construct a set of at most $(4rq-1)!!\,n^{2q-(t+1)/3}$ tuples that contains $S$ as a subset.
\end{proof}
\par
The next lemma, whose proof is modelled on that of~\cite[Lemma~2.6]{Schmidt2014}, provides the key step in the proof of Lemma~\ref{lem:Pr_CuCv}.
\begin{lemma}
\label{lem:moments}
Let $p$ and $h$ be integers satisfying $0\le h<p$ and let $A_n$ be drawn uniformly at random from $\{-1,1\}^n$. Let $u_2<u_3<\cdots<u_r$ and $v_2<v_3<\cdots<v_r$ be positive integers strictly less than $n$ satisfying $(u_2,\dots,u_r)\ne(v_2,\dots,v_r)$. Then
\begin{multline}
\E\Big[\big(S_{u_2,\dots,u_r}(A_n)S_{v_2,\dots,v_r}(A_n)\big)^{2p}\Big]\\
\le n^{2p}\big[(2p-1)!!\big]^2\bigg(1+\frac{(4rp)^{4rh}}{n^{1/3}}+\frac{(4rp)^{2rp}}{n^{(h+1)/3}}\bigg).   \label{eqn:moment}
\end{multline}
\end{lemma}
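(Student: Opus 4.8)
The plan is to expand the $2p$-th moment into a sum of monomials in the $a_j$ and to use that, since the $a_j$ are independent uniform $\pm1$ variables, the expectation of such a monomial equals $1$ when every index occurs with even multiplicity and $0$ otherwise. Writing $u_1=v_1=0$, I would expand
\[
\big(S_{u_2,\dots,u_r}(A_n)S_{v_2,\dots,v_r}(A_n)\big)^{2p}=\sum_{x_1,\dots,x_{2p}}\;\sum_{y_1,\dots,y_{2p}}\;\prod_{i=1}^{2p}\prod_{k=1}^{r}a_{x_i+u_k}\prod_{i=1}^{2p}\prod_{k=1}^{r}a_{y_i+v_k},
\]
with $1\le x_i\le n-u_r$ and $1\le y_i\le n-v_r$. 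Taking expectations, $\E[\,\cdots\,]$ equals the number of tuples $(x_1,\dots,x_{2p},y_1,\dots,y_{2p})$ for which $(x_i+u_k,y_i+v_k)_{i,k}$ is even; enlarging the ranges to $\{1,\dots,n\}$ only increases this count and places us exactly in the setting of Lemma~\ref{lem:even_tuple_double} with $q=p$.

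I would then stratify this count by the largest $d$ for which $(x_i)_{i\in I}$ is $d$-even, writing the total as $\sum_{d=0}^{p}M_d$ and estimating the three ranges $d=p$, $p-h\le d<p$, and $d<p-h$ separately, so as to produce the three summands $1$, $(4rp)^{4rh}/n^{1/3}$, and $(4rp)^{2rp}/n^{(h+1)/3}$ inside the bracket. For the main term $d=p$, where $(x_i)$ is even, the crucial point is that evenness of the whole tuple then forces $(y_i)$ to be even as well: if $(x_i)$ is even, each value already occurs an even number of times in $(x_i+u_k)_{i,k}$, so the complementary sub-tuple $(y_i+v_k)_{i,k}$ must be even; encoding a multiset by its $0/1$ polynomial modulo $2$, this says $\big(\sum_i z^{y_i}\big)\big(\sum_k z^{v_k}\big)\equiv 0$ over the two-element field, and since that ring has no zero divisors and $\sum_k z^{v_k}\ne 0$, we get $\sum_i z^{y_i}\equiv 0$, i.e. $(y_i)$ is even. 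Lemma~\ref{lem:even_tuple_single} then bounds the number of even $(x_i)$ and of even $(y_i)$ each by $(2p-1)!!\,n^{p}$, giving $M_p\le[(2p-1)!!]^2n^{2p}$, the first summand.

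For the tail $d<p-h$, Lemma~\ref{lem:even_tuple_double} with $t=h$ gives at most $(4rp-1)!!\,n^{2p-(h+1)/3}$, and since $(4rp-1)!!\le(4rp)^{2rp}\le[(2p-1)!!]^2(4rp)^{2rp}$ this is the third summand. The intermediate range $p-h\le d<p$ is the crux. Here $(x_i)$ is $d$-even with defect $e:=p-d\le h$, so exactly $2e\le 2h$ of the $x_i$ carry odd-multiplicity values while the remaining $x_i$ split into $d$ value-pairs whose $2dr$ shifts pair up automatically. Redoing the pairing construction of Lemma~\ref{lem:even_tuple_double}, the fully-paired $x$-part and the $y$–$y$ pairs contribute freedoms bounded as in the main term (accounting for the $[(2p-1)!!]^2$), whereas only the $\le 2rh$ shifted variables arising from the $2e$ odd $x_i$, together with their partners, can lie in cross-pairs $\{x_i+u_k,y_j+v_\ell\}$. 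Hence the number of ways to form these few cross-pairs is at most $(4rp)^{4rh}$ rather than the full $(4rp-1)!!$, while, exactly as in the $t=0$ case of Lemma~\ref{lem:even_tuple_double}, the presence of at least one cross-pair still forces the loss of a factor $n^{1/3}$ through the mechanism that one cross-pair determines at least two further variables in the list. Summing over $e\le h$ yields $\sum_{p-h\le d<p}M_d\le[(2p-1)!!]^2(4rp)^{4rh}n^{2p-1/3}$, the second summand.

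I expect the bookkeeping in this intermediate range to be the main obstacle: one must make precise that defect $e\le h$ genuinely confines the cross-pairs to $O(rh)$ shifted variables, so that the pairing constant degrades only to $(4rp)^{4rh}$, while simultaneously retaining the cube-root saving that the standard construction provides — this is where the argument of \cite[Lemma~2.6]{Schmidt2014} must be generalised most carefully. Adding the three estimates and factoring out $n^{2p}[(2p-1)!!]^2$ then yields the claimed inequality~\eqref{eqn:moment}.
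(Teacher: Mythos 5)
Your overall strategy --- expand the moment, reduce to counting the even tuples of the form \eqref{eqn:element_of_T}, and split according to how close $(x_i)_{i\in I}$ is to being even --- is the paper's strategy, and two of your three regimes are fine. In particular, your $\mathbb{F}_2[z]$ argument that evenness of the full tuple together with evenness of $(x_i)_{i\in I}$ forces $(y_i)_{i\in I}$ to be even is a clean justification of a fact the paper uses only implicitly, and your treatment of the range $d<p-h$ via Lemma~\ref{lem:even_tuple_double} with $q=p$, $t=h$ matches the paper's bound for its class $T_3$.

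The gap is in the intermediate range, and it is not merely unfinished bookkeeping: stratifying by the defect of $(x_i)_{i\in I}$ \emph{alone} does not work. Your class $p-h\le d<p$ contains tuples in which $(y_i)_{i\in I}$ is $d_2$-even with $d_2$ arbitrarily small, because evenness of the full tuple only forces the odd part of the multiset $\{y_i+v_k\}$ to be small (of size at most $2r(p-d)\le 2rh$), and a small odd part of the shift multiset does not force a small defect of $(y_i)$ itself: for $r=2$, $v_2=1$, the tuple $y_i=i$ is $0$-even while $\{y_i,y_i+1\}_i$ has odd part of size $2$. For such tuples the step ``the $y$--$y$ pairs contribute freedoms bounded as in the main term, accounting for $[(2p-1)!!]^2$'' has no justification: Lemma~\ref{lem:even_tuple_single} counts \emph{even} tuples $(y_i)$, and when $(y_i)$ is far from even the only available count runs over pairings of all $2pr$ shifts, costing a factor of order $(4rp)^{2rp}$ rather than $(4rp)^{4rh}$ (and a naive determination count then also loses up to $n^{h}$ in the power of $n$). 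The paper avoids this by defining the middle class $T_2$ through \emph{both} defects ($p-h\le d_1,d_2\le p$, at least one strict) and pushing everything with $\min(d_1,d_2)<p-h$ into the tail class; within $T_2$ it extracts even sub-tuples $(x_i)_{i\in J}$ and $(y_i)_{i\in K}$ with $\abs{J}=\abs{K}=2p-2h$, counts these by Lemma~\ref{lem:even_tuple_single} at cost $\binom{2p}{2h}^2\big[(2p-2h-1)!!\big]^2n^{2p-2h}$, and applies Lemma~\ref{lem:even_tuple_double} with $q=h$, $t=0$ only to the residual $4rh$-tuple indexed by $I\setminus J$ and $I\setminus K$, which yields the factor $(4rh-1)!!\,n^{2h-1/3}$ and hence the summand $(4rp)^{4rh}n^{-1/3}$. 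Adopt this two-parameter stratification (or prove separately that the case $d_1\ge p-h$, $d_2<p-h$ obeys the $T_3$-type bound, which follows from Lemma~\ref{lem:even_tuple_double} applied to $(y_i)$ by symmetry); with that change your outline becomes the paper's proof.
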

\begin{proof}
Write $A_n=(a_1,a_2,\dots,a_n)$. Expand to see that the left hand side of~\eqref{eqn:moment} equals
\begin{multline}
\sum_{i_1,\dots,i_{2p}=1}^{n-u_r}\;\sum_{j_1,\dots,j_{2p}=1}^{n-v_r}\E\big[a_{i_1}a_{i_1+u_2}\cdots a_{i_1+u_r} \cdots a_{i_{2p}}a_{i_{2p}+u_2}\cdots a_{i_{2p}+u_r}\\
a_{j_1}a_{j_1+v_2}\cdots a_{j_1+v_r}\cdots a_{j_{2p}}a_{j_{2p}+v_2}\cdots a_{j_{2p}+v_r}\big].   \label{eqn:moment_expanded}
\end{multline}
Write $I=\{1,2,\dots,2p\}$ and let $T$ be the set containing all even tuples in $\{1,\dots,n\}^{4rp}$ of the form
\begin{equation}
(x_i,x_i+u_2,\dots,x_i+u_r,y_i,y_i+v_2,\dots,y_i+v_r)_{i\in I}.   \label{eqn:element_of_T}
\end{equation}
Since $a_1,\dots,a_n$ are mutually independent, $\E[a_j]=0$, and $a_j^2=1$ for all $j\in\{1,\dots,n\}$, we find from~\eqref{eqn:moment_expanded} that the left hand side of~\eqref{eqn:moment} equals $\abs{T}$. It remains to show that $\abs{T}$ is at most the right hand side of~\eqref{eqn:moment}.
\par
We define the following subsets of $T$.
\begin{list}{$\bullet$}{\leftmargin=1em \itemindent=0em \itemsep=0ex}
\item $T_1$ contains all elements~\eqref{eqn:element_of_T} of $T$ such that $(x_i)_{i\in I}$ and $(y_i)_{i\in I}$ are even.

\item $T_2$ contains all elements~\eqref{eqn:element_of_T} of $T$ such that $(x_i)_{i\in I}$ is $d_1$-even and $(y_i)_{i\in I}$ is $d_2$-even for some $d_1$ and $d_2$ satisfying $p-h\le d_1,d_2\le p$, at least one of them strictly less than $p$.

\item $T_3$ contains all elements~\eqref{eqn:element_of_T} of $T$ such that $(x_i)_{i\in I}$ or $(y_i)_{i\in I}$ is $d$-even for some $d<p-h$.
\end{list}
It is readily verified that $T_1$, $T_2$, and $T_3$ partition $T$. We now bound the cardinalities of $T_1$, $T_2$, and $T_3$.
\par
{\itshape The set $T_1$.} Using Lemma~\ref{lem:even_tuple_single} applied with $q=p$, we have
\begin{equation}
\abs{T_1}\le \big[(2p-1)!!\big]^2\,n^{2p}.   \label{eqn:T1}
\end{equation}
\par
{\itshape The set $T_2$.} 
Consider an element~\eqref{eqn:element_of_T} of $T_2$. Then there exist $(2p-2h)$-element subsets $J$ and $K$ of $I$ such that $(x_i)_{i\in J}$ and $(y_i)_{i\in K}$ are even and
\begin{equation}
(x_i)_{i\in I\setminus J}   \label{eqn:tuple_2h_lr}
\end{equation}
is not even (if $(x_i)_{i\in I\setminus J}$ were even, then $(y_i)_{i\in I\setminus K}$ would also be even, which contradicts the definition of the elements of $T_2$). Since $(x_i)_{i\in J}$ and $(y_i)_{i\in K}$ are even and the tuple~\eqref{eqn:element_of_T} is even, we find that
\begin{equation}
(x_i,x_i+u_2,\dots,x_i+u_r,y_j,y_j+v_2,\dots,y_j+v_r)_{i\in I\setminus J,\,j\in I\setminus K}   \label{eqn:tuple_2h}
\end{equation}
is also even. There are ${2p\choose 2h}$ subsets $J$ and ${2p\choose 2h}$ subsets $K$. By Lemma~\ref{lem:even_tuple_single} applied with $q=p-h$, for each such $J$ and $K$, there are at most $(2p-2h-1)!!\,n^{p-h}$ even tuples $(x_i)_{i\in J}$ satisfying $1\le x_i\le n$ for each $i\in J$ and at most $(2p-2h-1)!!\,n^{p-h}$ even tuples $(y_i)_{i\in K}$ satisfying $1\le y_i\le n$ for each $i\in K$. By Lemma~\ref{lem:even_tuple_double} applied with $q=h$ and $t=0$, the number of even tuples in $\{1,\dots,n\}^{4rh}$ of the form~\eqref{eqn:tuple_2h} such that the tuple in~\eqref{eqn:tuple_2h_lr} is not even is at most $(4rh-1)!!\,n^{2h-1/3}$. Therefore,
\begin{align}
\abs{T_2}&\le (4rh-1)!!\,n^{2h-1/3} \, \Big[\tbinom{2p}{2h}(2p-2h-1)!!\,n^{p-h}\Big]^2 \nonumber\\
&\le n^{2p-1/3}\big[(2p-1)!!\big]^2\,(4rp)^{4rh}.   \label{eqn:T2}   
\end{align}
\par
{\itshape The set $T_3$.} By Lemma~\ref{lem:even_tuple_double} applied with $q=p$ and $t=h$ and by symmetry, we have
\begin{equation}
\abs{T_3}\le2(4rp-1)!!\,n^{2p-(h+1)/3}\le n^{2p-(h+1)/3}(4rp)^{2rp}.   \label{eqn:T3}
\end{equation}
\par
Now from~\eqref{eqn:T1},~\eqref{eqn:T2}, and~\eqref{eqn:T3} we get an upper bound for $\abs{T}$, from which we can deduce~\eqref{eqn:moment}.
\end{proof}
\par
We now prove Lemma~\ref{lem:Pr_CuCv}.
\begin{proof}[Proof of Lemma~\ref{lem:Pr_CuCv}]
Let $X_1$ and $X_2$ be a random variables and let $p$ be a positive integer. Then by Markov's inequality, for $\theta_1,\theta_2>0$,
\[
\Pr\big[\abs{X_1}\ge\theta_1\,\cap\,\abs{X_2}\ge\theta_2\big]\le\frac{\E\big[(X_1X_2)^{2p}\big]}{(\theta_1\theta_2)^{2p}}.
\]
Let $h$ be an integer satisfying $0\le h<p$. Lemma~\ref{lem:moments} shows that the left hand side of~\eqref{eqn:prob_upper_bound} is at most
\begin{equation}
\frac{[(2p-1)!!\big]^2}{(2\log \tbinom{n}{r-1})^{2p}}\big[1+K_1(n,p,h)+K_2(n,p,h)\big],   
\label{Pr_moment}
\end{equation}
where
\begin{align*}
K_1(n,p,h)&=n^{-1/3}\,(4rp)^{4rh},\\
K_2(n,p,h)&=n^{-(h+1)/3}\,(4rp)^{2rp}.
\end{align*}
We take $p=\lfloor\log \tbinom{n}{r-1}\rfloor$ and $h=\lfloor \alpha\log\log n\rfloor$ for some $\alpha>0$, to be determined later, and show that~\eqref{Pr_moment} is at most $23/\tbinom{n}{r-1}^2$ for all sufficiently large~$n$. Notice that $h<p$ for all sufficiently large $n$, as assumed. By Stirling's approximation
\[
\sqrt{2\pi k}\;k^ke^{-k}\le k!\le\sqrt{3\pi k}\;k^ke^{-k},
\]
we have
\[
\frac{[(2p-1)!!\big]^2}{(2\log \tbinom{n}{r-1})^{2p}}\le \frac{3p^{2p}e^{-2p}}{(\log \tbinom{n}{r-1})^{2p}}\le\frac{3e^2}{\tbinom{n}{r-1}^2}.
\]
Moreover
\begin{align*}
K_1(n,p,h)&\le K_1(n,r\log n,\alpha\log\log n)\\
&=n^{-\frac{1}{3}}\,n^{\frac{2\alpha\log\log n(\log r+\log\log n)}{\log n}}\\
&=O(n^{-\frac{1}{4}})
\intertext{and}
K_2(n,p,h)&\le K_2(n,r\log n,(\alpha-1)\log\log n)\\
&=n^{-\frac{1}{3}-(\frac{\alpha-1}{3}-2r^2)\log\log n+2r^2\log (4r^2)}\\
&=O(n^{-\log\log n})
\end{align*}
by taking $\alpha=10r^2$, say. The lemma follows since $3e^2<23$.
\end{proof}


\section{Almost sure convergence}
\label{sec:almost_sure_conv}

In this section we prove Theorem~\ref{thm:conv_almost_surely}. We begin with the following standard result (see~\cite[Theorem~A.1.1]{Alon2008}, for example).
\begin{lemma}
\label{lem:large_deviation}
Let $X_1,\dots,X_n$ be independent random variables, each taking the values $-1$ and $1$, each with probability $1/2$. Then, for $\lambda\ge 0$,
\[
\Pr\Bigg[\Biggabs{\sum_{j=1}^nX_j}>\lambda\Bigg]\le 2\exp\bigg(-\frac{\lambda^2}{2n}\bigg).
\]
\end{lemma}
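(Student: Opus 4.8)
The plan is to use the standard exponential moment (Chernoff) method. First I would treat the one-sided tail $\Pr\big[\sum_{j=1}^n X_j > \lambda\big]$ and recover the two-sided bound at the end by symmetry, which is where the factor of $2$ comes from. The case $\lambda=0$ is trivial, since any probability is at most $1<2$, so I may assume $\lambda>0$.

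For the one-sided tail, fix a parameter $t>0$ and apply Markov's inequality to the nonnegative random variable $\exp\big(t\sum_j X_j\big)$. Since $\{\sum_j X_j>\lambda\}\subseteq\{\exp(t\sum_j X_j)\ge e^{t\lambda}\}$, this gives
\[
\Pr\Bigg[\sum_{j=1}^n X_j > \lambda\Bigg]\le e^{-t\lambda}\,\E\Bigg[\exp\bigg(t\sum_{j=1}^n X_j\bigg)\Bigg].
\]
By independence the expectation factorises as $\prod_{j=1}^n\E[e^{tX_j}]$, and since each $X_j$ takes the values $\pm 1$ with equal probability, $\E[e^{tX_j}]=\cosh t$ for every $j$.

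The key analytic step, and really the only nontrivial ingredient, is the elementary inequality $\cosh t\le e^{t^2/2}$, valid for all real $t$. This follows from comparing the power series of the two sides term by term, using $(2k)!\ge 2^k\,k!$. Granting this, I obtain $\E\big[\exp(t\sum_j X_j)\big]\le e^{nt^2/2}$, and hence
\[
\Pr\Bigg[\sum_{j=1}^n X_j > \lambda\Bigg]\le\exp\bigg(\frac{nt^2}{2}-t\lambda\bigg).
\]

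Finally I would optimise over $t$. The exponent is minimised at $t=\lambda/n>0$, where it takes the value $-\lambda^2/(2n)$, so that $\Pr\big[\sum_j X_j>\lambda\big]\le\exp\big(-\lambda^2/(2n)\big)$. Because the law of $(X_1,\dots,X_n)$ is invariant under negation, the identical bound holds for $\Pr\big[\sum_j X_j<-\lambda\big]$; adding the two disjoint one-sided estimates produces the claimed factor of $2$. The only obstacle worth naming is the verification of $\cosh t\le e^{t^2/2}$, after which every remaining step is routine.
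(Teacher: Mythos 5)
Your proof is correct and is precisely the standard exponential-moment (Chernoff) argument, including the key inequality $\cosh t\le e^{t^2/2}$ and the optimisation $t=\lambda/n$. The paper does not prove this lemma itself but cites it as a standard result from Alon and Spencer (Theorem~A.1.1 there), whose proof is essentially the one you give.
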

\par
Lemma~\ref{lem:large_deviation} is used to deduce the following result.
\begin{lemma}
\label{lem:prob_difference}
Let $(a_1,a_2,\dots)$ be drawn from $\Omega$, equipped with the probability measure defined by~\eqref{eqn:prob_measure}, and write $A_n=(a_1,a_2,\dots,a_n)$. Let $n_1,n_2,\dots$ be a strictly increasing sequence of integers greater than or equal to~$r$. Then, almost surely,
\[
C_r(A_{n_{k+1}})-C_r(A_{n_k})\le\sqrt{10(n_{k+1}-n_k)\log \tbinom{n_{k+1}}{r-1}}
\]
for all sufficiently large $k$.
\end{lemma}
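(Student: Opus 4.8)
The plan is to reduce the statement to a uniform tail estimate for sums of independent signs and then invoke the Borel--Cantelli lemma. Throughout write $N=n_{k+1}$ and $M=n_k$, so that $r\le M<N$, and abbreviate $\lambda=\sqrt{10(N-M)\log\binom{N}{r-1}}$. Using the rewriting~\eqref{eqn:C_r_rewritten}, fix lags $0<u_2<\cdots<u_r<N$ and indices $1\le m_1\le m_2\le N-u_r$ that realise $C_r(A_N)$, and put $b_j=a_ja_{j+u_2}\cdots a_{j+u_r}$. The crucial deterministic observation is that the product $b_j$ involves only the entries $a_1,\dots,a_M$ exactly when $j\le M-u_r$. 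I would therefore split the optimal sum $\sum_{j=m_1}^{m_2}b_j$ at the index $M-u_r$ into an \emph{inner} part comprising the terms with $j\le M-u_r$ and a \emph{tail} part comprising the terms with $j>M-u_r$.

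When $u_r<M$, the inner part $\sum_{j=m_1}^{\min(m_2,\,M-u_r)}b_j$ is an admissible configuration in the definition of $C_r(A_M)$, so its modulus is at most $C_r(A_M)$; when $u_r\ge M$ or $m_1>M-u_r$ the inner part is empty. In every case the tail runs over consecutive indices $j$ with $\max(m_1,M-u_r+1)\le j\le m_2$, and since $m_2\le N-u_r$ this tail contains at most $N-M$ terms. The triangle inequality then gives the deterministic bound
\[
C_r(A_N)-C_r(A_M)\le\Biggabs{\sum_{j\in J}b_j},
\]
where $J$ is some interval of length at most $N-M$ contained in $\{1,\dots,N-u_r\}$. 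I expect this case analysis --- distinguishing $u_r<M$ from $u_r\ge M$ and checking the length bound on the tail --- to be the main, albeit elementary, obstacle; everything afterwards is soft.

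It now suffices to bound $\bigabs{\sum_{j\in J}b_j}$ uniformly over all lag-tuples $(u_2,\dots,u_r)$ and all intervals $J$ of length at most $N-M$. For fixed lags, Lemma~\ref{lem:independence} shows that the products $b_j$ are mutually independent signs, so for a single interval $J$ of length $\ell\le N-M$ Lemma~\ref{lem:large_deviation} yields
\[
\Pr\Big[\bigabs{\textstyle\sum_{j\in J}b_j}>\lambda\Big]\le 2\exp\bigg(-\frac{\lambda^2}{2\ell}\bigg)\le 2\exp\bigg(-\frac{\lambda^2}{2(N-M)}\bigg)=\frac{2}{\binom{N}{r-1}^{5}},
\]
by the choice of $\lambda$. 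There are at most $\binom{N}{r-1}$ admissible lag-tuples and at most $N^2$ intervals $J$, so a union bound shows that the probability that $C_r(A_N)-C_r(A_M)$ exceeds $\lambda$ is at most $2N^2\binom{N}{r-1}^{-4}$.

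Finally I would verify summability over $k$. Since the $n_k$ form a strictly increasing sequence of integers we have $N=n_{k+1}\ge k$, and since $r\ge 2$ the lower bound $\binom{N}{r-1}\ge(N/(r-1))^{r-1}$ from~\eqref{eqn:binomial_bound} gives $2N^2\binom{N}{r-1}^{-4}=O(N^{6-4r})=O(N^{-2})=O(k^{-2})$, which is summable. The Borel--Cantelli lemma then shows that, almost surely, $C_r(A_{n_{k+1}})-C_r(A_{n_k})\le\lambda$ for all sufficiently large $k$, which is exactly the asserted bound.
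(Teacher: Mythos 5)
Your proof is correct and takes essentially the same route as the paper's: both split the optimal correlation sum for $A_{n_{k+1}}$ at the index beyond which new entries enter, bound the inner part by $C_r(A_{n_k})$, control the tail of at most $n_{k+1}-n_k$ independent signs via Lemma~\ref{lem:independence} and Lemma~\ref{lem:large_deviation}, and finish with a union bound and Borel--Cantelli. The only cosmetic difference is that you work with the $(m_1,m_2)$ formulation~\eqref{eqn:C_r_rewritten} and count $N^2$ intervals, whereas the paper fixes $m_1$ at the boundary and counts $(n_{k+1}-n_k)\binom{n_{k+1}}{r}$ configurations; the resulting bounds are of the same order.
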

\begin{proof}
Write
\begin{equation}
\lambda=\sqrt{10(n_{k+1}-n_k)\log\tbinom{n_{k+1}}{r-1}}.   \label{eqn:def_lambda}
\end{equation}
If \begin{equation}
C_r(A_{n_{k+1}})-C_r(A_{n_k})>\lambda,   \label{eqn:diff_C_r}
\end{equation}
then
\begin{equation}
\Biggabs{\sum_{j=\max(1,n_k-u_r+1)}^{m}a_{j+u_1}a_{j+u_2}\cdots a_{j+u_r}}>\lambda   \label{eqn:S_ge_lambda}
\end{equation}
for at least one tuple $(u_1,u_2,\dots,u_r)$ satisfying
\begin{equation}
0\le u_1<u_2<\cdots<u_r<n_{k+1}   \label{eqn:r_tuple_u}
\end{equation}
and at least one $m$ satisfying
\begin{equation}
n_k-u_r+1\le m\le n_{k+1}-u_r.   \label{eqn:k_range}
\end{equation}
Let $(u_1,u_2,\dots,u_r)$ be a tuple of integers satisfying~\eqref{eqn:r_tuple_u} and let $m$ be an integer satisfying~\eqref{eqn:k_range}. By Lemma~\ref{lem:independence}, the sum in~\eqref{eqn:S_ge_lambda} is a sum of at most $n_{k+1}-n_k$ independent random variables, each taking each of the values $1$ and $-1$ with probability~$1/2$. Thus, by Lemma~\ref{lem:large_deviation}, the probability of~\eqref{eqn:S_ge_lambda} is at most
\[
2\exp\bigg(-\frac{\lambda^2}{2(n_{k+1}-n_k)}\bigg)=2\binom{n_{k+1}}{r-1}^{-5},
\]
after substituting~\eqref{eqn:def_lambda}. Summing over all possible tuples $(u_1,u_2,\dots,u_r)$ and all possible $m$, the probability that~\eqref{eqn:S_ge_lambda} happens for some $(u_1,u_2,\dots,u_r)$ satisfying~\eqref{eqn:r_tuple_u} and some integer $m$ satisfying~\eqref{eqn:k_range} is at most
\[
2(n_{k+1}-n_k)\binom{n_{k+1}}{r}\binom{n_{k+1}}{r-1}^{-5}.
\]
This is also an upper bound for the probability of~\eqref{eqn:diff_C_r}, and so
\[
\Pr\big[C_r(A_{n_{k+1}})-C_r(A_{n_k})>\lambda\big]\le 2(n_{k+1})^2\binom{n_{k+1}}{r-1}^{-4}\le \frac{2}{(n_{k+1})^2}.
\]
Thus,
\[
\sum_{k=1}^\infty\Pr\big[C_r(A_{n_{k+1}})-C_r(A_{n_k})>\lambda\big]\le\sum_{k=1}^\infty\frac{2}{(n_{k+1})^2}<\infty,
\]
and the result follows from the Borel-Cantelli Lemma.
\end{proof}
\par 
We now prove Theorem~\ref{thm:conv_almost_surely}.
\begin{proof}[Proof of Theorem~\ref{thm:conv_almost_surely}]
Write
\[
\vartheta_n=\sqrt{2n\log \tbinom{n}{r-1}}
\]
and let $n_k$ be the smallest integer that is at least $e^{k^{1/2}}$. We first show that the theorem holds for the subsequence $n_k$, namely that, as $k\to\infty$,
\begin{equation}
\frac{C_r(A_{n_k})}{\vartheta_{n_k}}\to1\quad\text{almost surely}.   \label{eqn:almost_sure_conv_nk}
\end{equation}
To do so, choose an $\e>0$ and observe that by the triangle inequality, the probability
\[
\Pr\Bigg[\biggabs{\frac{C_r(A_n)}{\vartheta_n}-1}>\e\Bigg]
\]
is bounded from above by
\[
\Pr\Bigg[\biggabs{\frac{C_r(A_n)}{\vartheta_n}-\frac{\E[C_r(A_n)]}{\vartheta_n}}>\tfrac{1}{2}\e\Bigg]+\Pr\Bigg[\biggabs{\frac{\E[C_r(A_n)]}{\vartheta_n}-1}>\tfrac{1}{2}\e\Bigg].
\]
By Proposition~\ref{pro:ECr}, the second probability equals zero for all sufficiently large $n$. The first probability can be bounded using Lemma~\ref{lem:concentration}, showing that
\[
\Pr\Bigg[\biggabs{\frac{C_r(A_n)}{\vartheta_n}-1}>\tfrac{1}{2}\e\Bigg]\le 2\exp\bigg(-\frac{\e^2}{4r^2}\log\binom{n}{r-1}\bigg)
\]
for all sufficiently large $n$. We can further bound this expression very crudely by $1/(\log n)^3$, say, for all sufficiently large $n$. Thus, since $n_k\ge e^{k^{1/2}}$, we have for sufficiently large $k_0$,
\[
\sum_{k=k_0}^\infty\Pr\Bigg[\biggabs{\frac{C_r(A_{n_k})}{\vartheta_{n_k}}-1}>\tfrac{1}{2}\e\Bigg]\le\sum_{k=k_0}^\infty\frac{1}{(\log n_k)^3}\le \sum_{k=k_0}^\infty \frac{1}{k^{3/2}}<\infty
\]
and~\eqref{eqn:almost_sure_conv_nk} follows from the Borel-Cantelli Lemma.
\par
We shall now complete the proof by showing that, as $k\to\infty$,
\begin{equation}
\max_{n_k\le n\le n_{k+1}}\biggabs{\frac{C_r(A_n)}{\vartheta_n}-1}\to 0\quad\text{almost surely}.   \label{eqn:almost_sure_conv_max}
\end{equation}
We apply the triangle inequality to find that
\begin{multline}
\max_{n_k\le n\le n_{k+1}}\biggabs{\frac{C_r(A_n)}{\vartheta_n}-1}\le \biggabs{1-\frac{C_r(A_{n_{k+1}})}{\vartheta_{n_{k+1}}}}\\[1ex]
+\max_{n_k\le n\le n_{k+1}}\biggabs{\frac{C_r(A_{n_{k+1}})}{\vartheta_{n_{k+1}}}-\frac{C_r(A_n)}{\vartheta_{n_{k+1}}}} 
+\max_{n_k\le n\le n_{k+1}}\biggabs{\frac{C_r(A_n)}{\vartheta_{n_{k+1}}}-\frac{C_r(A_n)}{\vartheta_n}}.   \label{eqn:max_triangle}
\end{multline}
Since $C_r(A_n)$ is non-decreasing, we find from Lemma~\ref{lem:prob_difference} that
\[
\max_{n_k\le n\le n_{k+1}}\biggabs{\frac{C_r(A_{n_{k+1}})}{\vartheta_{n_{k+1}}}-\frac{C_r(A_n)}{\vartheta_{n_{k+1}}}}\le\sqrt{\frac{5(n_{k+1}-n_k)}{n_{k+1}}}
\]
almost surely for all sufficiently large $k$. From
\begin{equation}
\lim_{k\to\infty}\frac{n_{k+1}}{n_k}=\lim_{k\to\infty}e^{(k+1)^{1/2}-k^{1/2}}=1   \label{eqn:lim_nk_ratio}
\end{equation}
we conclude that, as $k\to\infty$,
\begin{equation}
\max_{n_k\le n\le n_{k+1}}\biggabs{\frac{C_r(A_{n_{k+1}})}{\vartheta_{n_{k+1}}}-\frac{C_r(A_n)}{\vartheta_{n_{k+1}}}}\to0 \quad\text{almost surely}.   \label{eqn:asc_2}
\end{equation}
The third term on the right hand side of~\eqref{eqn:max_triangle} can be bounded as
\[
\max_{n_k\le n\le n_{k+1}}\biggabs{\frac{C_r(A_n)}{\vartheta_{n_{k+1}}}-\frac{C_r(A_n)}{\vartheta_n}}\le \frac{C_r(A_{n_{k+1}})}{\vartheta_{n_{k+1}}}\;\biggabs{1-\frac{\vartheta_{n_{k+1}}}{\vartheta_{n_k}}}.
\]
Using~\eqref{eqn:lim_nk_ratio}, it is readily verified that
\[
\lim_{k\to\infty}\frac{\vartheta_{n_{k+1}}}{\vartheta_{n_k}}=1
\]
and, after combination with~\eqref{eqn:almost_sure_conv_nk}, we conclude that, as $k\to\infty$,
\begin{equation}
\max_{n_k\le n\le n_{k+1}}\
\biggabs{\frac{C_r(A_n)}{\vartheta_{n_{k+1}}}-\frac{C_r(A_n)}{\vartheta_n}}\to0\quad\text{almost surely}.   \label{eqn:asc_3}
\end{equation}
The required convergence~\eqref{eqn:almost_sure_conv_max} follows by combining~\eqref{eqn:max_triangle},~\eqref{eqn:almost_sure_conv_nk},~\eqref{eqn:asc_2}, and~\eqref{eqn:asc_3}.
\end{proof}


\section{Minimum values}
\label{sec:minimal}

Recall that the \emph{scalar product} between two vectors $x=(x_1,\dots,x_\ell)$ and $y=(y_1,\dots,y_\ell)$ in $\C^\ell$ is $\langle x,y\rangle=\sum_{j=1}^\ell x_j\overline{y_j}$, where bar means complex conjugation. We shall see that Theorems C and~\ref{thm:lower_bound_max} follow from well known results on the maximum magnitude of the nontrivial scalar products over a set of vectors in $\C^\ell$; a good overview is given by Kumar and Liu~\cite{Kumar1990}. The most famous such result is the following bound due to Welch~\cite{Welch1974}. 
\begin{lemma}[{Welch~\cite{Welch1974}}]
\label{lem:Welch_bound}
For positive integers $\ell$ and $m\ge 2$, let $v_1,\dots,v_m$ be elements of $\mathbb{C}^\ell$ satisfying $\|v_i\|_2^2=\ell$ for each $i$. Then, for integral $k\ge 1$,
\[
\max_{i\ne i'}\,\bigabs{\langle v_i,v_{i'}\rangle}\ge\Bigg[\frac{\ell^{2k}}{m-1}\Bigg(\frac{m}{{\ell+k-1\choose k}}-1\Bigg)\Bigg]^{1/{2k}}.
\]
\end{lemma}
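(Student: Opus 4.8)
The plan is to form the $m \times \binom{\ell+k-1}{k}$ matrix whose rows are the $k$-fold symmetric tensor powers $v_i^{\otimes k}$ (equivalently, list the degree-$k$ monomials in the coordinates of $v_i$, weighted appropriately) and to bound the sum of squared magnitudes of all pairwise inner products of these tensors from above and below. The key algebraic fact I would use is that $\langle v_i^{\otimes k}, v_{i'}^{\otimes k}\rangle = \langle v_i, v_{i'}\rangle^k$, so that the Gram matrix of the tensors is the $k$-th Hadamard (entrywise) power of the original Gram matrix. Writing $G$ for the Gram matrix of the tensors, with diagonal entries $\|v_i^{\otimes k}\|_2^2 = \|v_i\|_2^{2k} = \ell^k$, the Frobenius norm gives
\[
\|G\|_F^2 = \sum_{i,i'} \bigabs{\langle v_i, v_{i'}\rangle}^{2k} = m\,\ell^{2k} + \sum_{i\ne i'}\bigabs{\langle v_i, v_{i'}\rangle}^{2k}.
\]

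The lower bound on $\|G\|_F^2$ comes from the standard inequality relating the Frobenius norm to the trace: since $G$ is positive semidefinite of rank at most $N:=\binom{\ell+k-1}{k}$, one has $\|G\|_F^2 = \sum_j \mu_j^2 \ge (\sum_j \mu_j)^2/N = (\operatorname{tr} G)^2/N$, where the $\mu_j\ge 0$ are the eigenvalues. Here $\operatorname{tr} G = m\ell^k$ and the nonzero eigenvalues number at most $N$, so $\|G\|_F^2 \ge (m\ell^k)^2/N = m^2\ell^{2k}/N$. Combining the two displays,
\[
\sum_{i\ne i'}\bigabs{\langle v_i, v_{i'}\rangle}^{2k} \ge \frac{m^2\ell^{2k}}{N} - m\,\ell^{2k} = \ell^{2k}\,m\!\left(\frac{m}{N}-1\right).
\]
Finally, bounding the left-hand sum by $m(m-1)$ times its maximum term, dividing by $m(m-1)$, and taking the $2k$-th root yields exactly the stated bound. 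The only remaining point is to confirm that the symmetric tensor space has dimension $N=\binom{\ell+k-1}{k}$, which is the number of degree-$k$ monomials in $\ell$ variables.

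The step I expect to require the most care is verifying the rank bound $\operatorname{rank}(G)\le N$ so that the Cauchy–Schwarz step $\sum\mu_j^2 \ge (\sum\mu_j)^2/N$ is valid with the correct denominator. The subtlety is that $G$ is the Hadamard $k$-th power of the original Gram matrix, and one must argue that its rank is at most the dimension of the degree-$k$ part of the symmetric algebra rather than, say, the full tensor power dimension $\ell^k$; this is precisely where the symmetric (multiset) counting $\binom{\ell+k-1}{k}$ enters, and it is what makes the bound stronger than the naive argument. Everything else—the multiplicativity of inner products under tensoring, the trace and Frobenius-norm identities, and the final extraction of the maximum—is routine. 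Since the lemma is classical and attributed to Welch, I would present this argument compactly and refer to \cite{Kumar1990} for the standard background.
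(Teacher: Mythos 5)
Your proposal is correct and follows essentially the same route as the paper, which only sketches the argument: it states the inequality $m\ell^{2k}+m(m-1)\max_{i\ne i'}\abs{\langle v_i,v_{i'}\rangle}^{2k}\ge\sum_{i,i'}\abs{\langle v_i,v_{i'}\rangle}^{2k}$ and leaves the lower bound on the right-hand side to the reader, which is precisely the symmetric-tensor-power Gram matrix argument you supply (rank at most $\binom{\ell+k-1}{k}$, then Cauchy--Schwarz on the eigenvalues). Your treatment of the rank bound via the symmetric subspace is the right justification, so there is nothing to correct.
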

\par
This lemma can be proved by observing
\[
m\ell^{2k}+m(m-1)\max_{i\ne i'}\,\bigabs{\langle v_i,v_{i'}\rangle}^{2k}\ge \sum_{i,i'}\bigabs{\langle v_i,v_{i'}\rangle}^{2k}
\]
and deriving a lower bound for the right hand side. We remark that, for $k>1$ and when the vectors have entries in $\{-1,1\}$, the bound in Lemma~\ref{lem:Welch_bound} can be slightly improved by a bound due to Sidelnikov~\cite{Sidelnikov1971}. Lemma~\ref{lem:Welch_bound} is now used to give a straightforward proof of Theorem~C.
\begin{proof}[Proof of Theorem~C]
Let $A_n=(a_1,a_2,\dots,a_n)$ be an element of $\{-1,1\}^n$. Write $\ell=\lfloor n/(2r+1)\rfloor$. For $\ell=0$, the theorem is trivial, so assume that $\ell\ge 1$. Let $S_1,S_2,\dots,S_m$ be $m=\lfloor (n-\ell+1)/r\rfloor$ pairwise disjoint $r$-element subsets of $\{0,\dots,n-\ell\}$. For each such set $S_i$, define the vector $v_i=(v_{i,1},\dots,v_{i,\ell})$ by
\[
v_{i,j}=\prod_{x\in S_i}a_{j+x}\quad\text{for each $j\in \{1,\dots,\ell\}$}.
\]
Since all of the sets $S_1,\dots,S_m$ have size $r$ and are pairwise disjoint, we have
\begin{equation}
C_{2r}(A_n)\ge \max_{i\ne i'}\,\bigabs{\langle v_i,v_{i'}\rangle}.   \label{eqn:inner_products_1}
\end{equation}
Observe that
\[
m=\bigg\lfloor \frac{n-\lfloor n/(2r+1)\rfloor+1}{r}\bigg\rfloor\ge \bigg\lfloor \frac{2n}{2r+1}\bigg\rfloor\ge 2\ell.
\]
Hence, $m\ge 2$ and we can apply Lemma~\ref{lem:Welch_bound} with $k=1$ to~\eqref{eqn:inner_products_1} to conclude
\[
\big[C_{2r}(A_n)\big]^2\ge \frac{\ell^2}{m-1}\Big(\frac{m}{\ell}-1\Big)>\ell\Big(1-\frac{\ell}{m}\Big)\ge \frac{\ell}{2},
\]
as required.
\end{proof}
\par
Slight improvements of Theorem~C are possible for particular values $r$, by choosing $\ell$ more carefully in the proof (see Anantharam~\cite{Anantharam2008} for $r=2$).
\par
We now prove Theorem~\ref{thm:lower_bound_max}.
\begin{proof}[Proof of Theorem~\ref{thm:lower_bound_max}]
Let $A_n=(a_1,a_2,\dots,a_n)$ be an element of $\{-1,1\}^n$. We have $n\ge 3$. Let $\ell=\lfloor n/3\rfloor$ and let $S_1,S_2,\dots,S_m$ be all $m={n-\ell+1\choose s}$ $s$-element subsets of $\{0,1,\dots,n-\ell\}$. For each such set $S_i$, define the vector $v_i=(v_{i,1},\dots,v_{i,\ell})$ by
\[
v_{i,j}=\prod_{x\in S_i}a_{j+x}\quad\text{for each $j\in \{1,\dots,\ell\}$}.
\]
Then
\[
\max\big\{C_2(A_n),C_4(A_n),\dots,C_{2s}(A_n)\big\}\ge \max_{i\ne i'}\,\bigabs{\langle v_i,v_{i'}\rangle}.
\]
We apply Lemma~\ref{lem:Welch_bound} with $k=s$ to get
\[
\Big[\max\big\{C_2(A_n),C_4(A_n),\dots,C_{2s}(A_n)\big\}\Big]^{2s}\ge \frac{\ell^{2s}}{m-1}\Bigg(\frac{m}{{\ell+s-1\choose s}}-1\Bigg).
\]
Write $n=3\ell+\delta$ for some $\delta\in\{0,1,2\}$. Then by~\eqref{eqn:binomial_bound} the leading term on the right hand side is
\[
\frac{\ell^{2s}}{m-1}\ge\frac{\ell^{2s}}{m}=\frac{(\frac{n-\delta}{3})^{2s}}{{(2n+\delta+3)/3\choose s}}\ge\bigg(\frac{s(n-\delta)^2}{3e(2n+\delta+3)}\bigg)^s>\bigg(\frac{sn}{9^2}\bigg)^s,
\]
using $n\ge 3$ and distinguishing the cases that $n\in\{3,4,5,6\}$ and $n\ge 7$ to get the last inequality.
\par
We complete the proof by showing that $m/{\ell+s-1\choose s}-1$ is greater than $1$. Define $f:\{1,2,\dots,\lfloor n/3\rfloor\}\to\Q$ by
\[
f(s)=\frac{{n-\ell+1\choose s}}{{\ell+s-1\choose s}}.
\]
A standard calculation shows that $f$ is monotonically increasing for $s\le (n-2\ell+2)/2$ and is monotonically decreasing for $s\ge (n-2\ell+2)/2$. Therefore, the minimum value of $f(s)$ is either $f(1)$ or $f(\lfloor n/3\rfloor)=f(\ell)$. Moreover, we readily verify that $f(1)>2$ and
\[
f(\ell)\ge\frac{{2\ell+1\choose \ell}}{{2\ell-1\choose \ell}}
=\frac{2(2\ell+1)}{\ell+1}\ge 3.
\]
Hence $f$ satisfies $f(s)>2$ on its entire domain, as required.
\end{proof}



\def\cprime{$'$}
\providecommand{\bysame}{\leavevmode\hbox to3em{\hrulefill}\thinspace}
\providecommand{\MR}{\relax\ifhmode\unskip\space\fi MR }
\providecommand{\MRhref}[2]{%
  \href{http://www.ams.org/mathscinet-getitem?mr=#1}{#2}
}
\providecommand{\href}[2]{#2}

\end{document}